\newcommand{\lyxdot}{.}
\newcommand{\lyxaddress}[1]{
	\par {\raggedright #1
	\vspace{1.4em}
	\noindent\par}
}
\theoremstyle{plain}
\newtheorem{thm}{\protect\theoremname}
\theoremstyle{remark}
\newtheorem{rem}[thm]{\protect\remarkname}
\providecommand{\remarkname}{Remark}
\providecommand{\theoremname}{Theorem}
\begin{document}
\title{Machine-learning invariant foliations in forced systems for reduced
order modelling}
\author{Robert Szalai}
\date{20th March 2024}
\maketitle

\lyxaddress{School of Engineering Mathematics and Technology, University of Bristol,
Ada Lovelace Building, Tankard's Close, Bristol BS8 1TW, email: r.szalai@bristol.ac.uk}
\begin{abstract}
We identify reduced order models (ROM) of forced systems from data
using invariant foliations. The forcing can be external, parametric,
periodic or quasi-periodic. The process has four steps: 1. identify
an approximate invariant torus and the linear dynamics about the torus;
2. identify a globally defined invariant foliation about the torus;
3. identify a local foliation about an invariant manifold that complements
the global foliation 4. extract the invariant manifold as the leaf
going through the torus and interpret the result. We combine steps
2 and 3, so that we can track the location of the invariant torus
and scale the invariance equations appropriately. We highlight some
fundamental limitations of invariant manifolds and foliations when
fitting them to data, that require further mathematics to resolve.
\end{abstract}

\section{Introduction}

In this paper we develop a numerical method that identifies reduced
order models (ROM) of deterministic and externally forced systems
from data. A ROM is a self-contained system that tracks a small number
of properties of the system over time accurately. Ideally, a ROM represents
dynamics independent of the coordinate system in which the data is
collected. This means that when a ROM is identified, a coordinate
system, where the ROM is minimal, must also be identified \cite{Champion2019Autoencoder}.
There are many uses of a ROM, such as faster model predictions, identification
of governing equations of physical phenomena and meaningful interpretation
of data \cite{CAMPSVALLS20231,epstein2008}.

In our definition, a genuine ROM discards unimportant dynamics \cite{Szalai2023Fol},
hence it does not reproduce the full data set. Instead, the ROM picks
out a specific and simple phenomenon. A data set can yield many different
ROMs, depending on what phenomenon we wish to extract. Only when all
phenomena are identified can one reproduce the full data set. The
coupling and relations among all identified ROMs can be recovered
using the identified coordinate systems.

\begin{figure}[H]
\begin{centering}
\begin{center}
\begin{minipage}{0.49\textwidth}
a)
\begin{tikzcd} \tikz \node[draw,circle]{\textcolor{blue}{$X$}}; \arrow[r, dashed, "\boldsymbol{F}"] \arrow[d, "\boldsymbol{U}"] & X \arrow[d, dashed, "\boldsymbol{U}"]  \\ 
Z \arrow[r, "\boldsymbol{R}"]& \tikz \node[draw,rectangle]{\textcolor{red}{$Z$}}; \end{tikzcd}
b) 
\begin{tikzcd} X \arrow[r, dashed, "\boldsymbol{F}"] & \tikz \node[draw,rectangle]{\textcolor{red}{$X$}};   \\ 
\tikz \node[draw,circle]{\textcolor{blue}{$Z$}}; \arrow[r, "\boldsymbol{R}"]  \arrow[u,  dashed, "\boldsymbol{W}"]& Z \arrow[u, "\boldsymbol{W}"] \end{tikzcd}\\
c)\begin{tikzcd} \tikz \node[draw,circle]{\textcolor{blue}{$X$}}; \arrow[r, dashed, "\boldsymbol{F}"] \arrow[d, "\boldsymbol{U}"] & \tikz \node[draw,rectangle]{\textcolor{red}{$X$}}; \\
Z \arrow[r, "\boldsymbol{R}"]                             & Z \arrow[u, "\boldsymbol{W}"]                             \end{tikzcd} d)
\begin{tikzcd} X \arrow[r, dashed,  "\boldsymbol{F}"]                              & X  \arrow[d, dashed, "\boldsymbol{U}"]                             \\ 
\tikz \node[draw,circle]{\textcolor{blue}{$Z$}}; \arrow[r, "\boldsymbol{R}"] \arrow[u, dashed, "\boldsymbol{W}"] & \tikz \node[draw,rectangle]{\textcolor{red}{$Z$}}; \end{tikzcd}%
\end{minipage}
\begin{minipage}{0.49\textwidth}
\includegraphics[width=\textwidth]{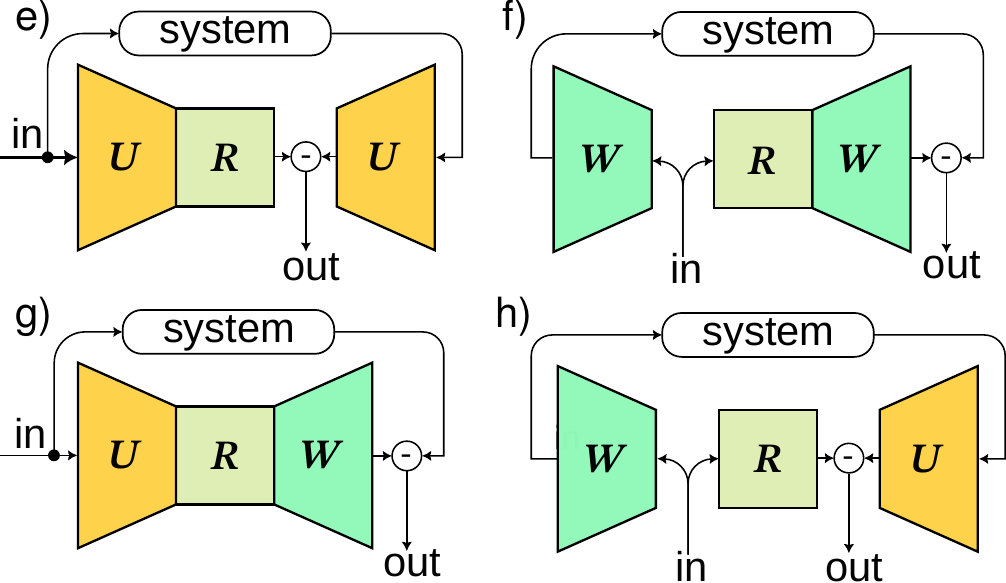}
\end{minipage}
\par\end{center}
\par\end{centering}
\caption{\label{fig:straight-commutative}Commutative diagrams of a) invariant
foliations, b) invariant manifolds, and connection diagrams of c)
autoencoders and d) equation-free models. The dashed arrows denote
the chain of function composition(s) that involves $\boldsymbol{F}$,
the continuous arrows denote the chain of function composition(s)
that involves map $\boldsymbol{R}$. The encircled vector space denotes
the domain of the defining equation and the boxed vector space is
the target of the defining equation. Diagrams (e,f,g,h) represent
the defining equations corresponding to diagrams (a,b,c,d). The input
(in) is the domain of the defining equation and the output (out) is
the target, which must vanish.}
\end{figure}

The mathematical term for having a self-contained part of the system
is \emph{invariance}. Invariance means that the evolution of the system
and the ROM are the same after a possibly non-invertible, but either
surjective or injective transformation \cite[Definition 1]{Szalai2023Fol}.
The transformation can either map from the physical coordinates to
the state space of the ROM, which is called the encoder or in reverse,
which is called the decoder. If we think of a system that maps an
initial condition to a prediction, we need to relate the state of
the system and the ROM to both the initial condition and the prediction.
Given that we have a choice between an encoder and decoder for relating
the ROM and the system, this gives us four architectures for ROM identification.
The four possibilities are illustrated in figure \ref{fig:straight-commutative}
and are called invariant foliations \cite{Szalai2020ISF}, invariant
manifolds \cite{hirsch1970}, autoencoders \cite{Kramer1991autoencoder}
and equation-free models \cite{Kevrekidis2003}. In what follows,
we represent the encoder by function $\boldsymbol{U}$, the decoder
by $\boldsymbol{W}$, the ROM by $\boldsymbol{R}$ and the (unknown)
system by $\boldsymbol{F}$. Often we cannot manipulate the initial
conditions of our system, which precludes us from using a decoder
when comparing initial conditions. This leaves us with invariant foliations
and autoencoders to fit the data to. However, as discussed in \cite{Szalai2023Fol},
autoencoders cannot ensure invariance except when the dimensionality
of the system and the dimensionality of the ROM are the same. Hence,
only invariant foliations remain a possibility, which we use subsequently.

Invariant foliations are not a new idea \cite{BatesFoliations2000,ShubFoliation2012,Lawson1974},
they are used to analyse chaotic systems \cite{wiggins2003introduction}.
Most similar to our application is, where an invariant foliation was
used to find initial conditions for ROMs \cite{Roberts89}. Elsewhere,
invariant foliations are also called fibre bundles \cite{AulbachFiber1998}.
For data-driven reduced order modelling about fixed points, non-resonant
invariant foliations were introduced in \cite{Szalai2020ISF,Szalai2023Fol},
and their existence, uniqueness was proven using the parametrisation
method \cite{delaLlave1997,Haro2016}. The geometric interpretation
of an invariant foliation is explained in \cite{Lawson1974,Szalai2020ISF}.

In many circumstances it is necessary to identify an invariant manifold,
represented by a decoder, that maps trajectories of the ROM back to
the physical space, where that data came from. An invariant manifold
can be calculated as a level set of an encoder of an invariant foliation
that contains the equilibrium or (quasi-) periodic orbit. Therefore,
we identify two invariant foliations from our data: one that captures
the dynamics of interest and one that complements it. The accuracy
of the second foliation is only important near the invariant manifold,
which allows us to use a low-order (possibly linear) model that maps
the leaves into each other. Having two complementary foliations also
allows us to recover the equilibrium or (quasi-) periodic orbit, which
lies at the intersection of the zero level sets of the two encoders.

The invariant foliation that captures the slowest dynamics is within
the class of once differentiable foliations, which allows for an accurate
ROM identification. However, the complementary foliation -- and consequently
the invariant manifold it defines -- is only unique among foliations
that are many times differentiable, depending on the spectral properties
of the system. Moreover, the numerical error of calculating the invariant
manifold scales with the power of the required order of differentiability
for uniqueness. To make this point precise let us consider the linear
system 
\begin{align*}
\dot{x} & =-x\\
\dot{y} & =-\frac{11}{2}y
\end{align*}
with trajectories $y$$\left(x\right)=cx^{11/2}$, where $c$ is an
integration constant. All trajectories are invariant manifolds, but
the only smooth one is $y\left(x\right)=0$. If we make a small numerical
error of $10^{-11}$ near the equilibrium, such that $y\left(10^{-2}\right)=10^{-11}$,
the error will be on the trajectory with $c=1$, which gives an error
at unit distance from the equilibrium of $y\left(1\right)=1$. The
error is not due to lack of invariance, because the result is an exact
solution, instead we have chosen a manifold that is only five times
differentiable as opposed to the required six. We used the fractional
coefficient $\frac{11}{2}$ to avoid talking about resonances.

As opposed to methods that carry out an asymptotic expansion of an
invariant manifold about an invariant torus or equilibrium, we fit
the foliation to data points and therefore the invariance equation
is satisfied approximately at each data point. This can result in
great differences from an asymptotically calculated invariant manifold,
the resulting ROM is still accurate as it satisfies the invariance
equation, sometimes better than the asymptotic expansion.

We note that there are other ways to single out a unique invariant
manifold than smoothness, for example by exponential dichotomy \cite{Llave1995}.
The two definitions do not always coincide \cite{delaLlave1997}.
However, both approaches are local to the equilibrium or invariant
torus, as opposed to data fitting, which is global. Data fitting cannot
take smoothness or exponential dichotomy into account, because data
points are discrete and we cannot differentiate over them. It is also
unclear what a global minimum of a loss function represents and how
it relates to asymptotic uniqueness criteria. It remains to be seen
whether a global criterion that defines unique and meaningful invariant
manifolds or foliations is possible.

The present paper is limited to a single set of parameters and therefore
not suitable for bifurcation analysis. A trivial, parameter dependent
extension, designates the parameters as state variables without any
dynamics and adds some constraints to the numerical representation
of the foliation.

The paper has three major sections. Section \ref{sec:theory} summarises
the relevant theory, section \ref{sec:numerics} introduces the numerical
methods and section \ref{sec:examples} illustrates the method on
a number of example. The impatient reader can skip to section \ref{sec:numerics}.

\section{\label{sec:theory}Set-up and invariant foliations}

We assume a deterministic system and that the state of the system
is sampled uniformly in time, giving us a series of data points. One
part of the state is a real, $n$-dimensional inner product vector
space $X$. The other part of the state is the $d$-dimensional torus
$\mathbb{T}^{d}$, where the forcing occurs due to a rigid rotation
$\boldsymbol{\omega}\in\mathbb{T}^{d}$. The evolution of the state
is described by a real analytic function $\boldsymbol{F}:\mathbb{T}^{d}\times X$,
such that
\begin{equation}
\begin{array}{rl}
\boldsymbol{x}_{k+1} & \negthickspace\negthickspace\negthickspace=\boldsymbol{F}\left(\boldsymbol{x}_{k},\boldsymbol{\theta}_{k}\right)\\
\boldsymbol{\theta}_{k+1} & \negthickspace\negthickspace\negthickspace=\boldsymbol{\theta}_{k}+\boldsymbol{\omega}
\end{array},\;k=1,2,\ldots.\label{eq:MAPSysDef-1}
\end{equation}

An invariant foliation is always specific to an underlying invariant
object, which in our case is an invariant torus. The invariant torus
can be parameterised over the torus $\mathbb{T}^{d},$ which we denote
by
\begin{equation}
\mathcal{T}=\left\{ \boldsymbol{K}\left(\boldsymbol{\theta}\right):\boldsymbol{\theta}\in\mathbb{T}^{d}\right\} ,\label{eq:TOR-geometry}
\end{equation}
where $\boldsymbol{K}:\mathbb{T}^{d}\to X$ is an analytic function.
The torus $\mathcal{T}$ is invariant if the invariance equation
\begin{equation}
\boldsymbol{K}\left(\boldsymbol{\theta}+\omega\right)=\boldsymbol{F}\left(\boldsymbol{\theta},\boldsymbol{K}\left(\boldsymbol{\theta}\right)\right)\label{eq:TOR-invariances}
\end{equation}
holds. Our numerical implementation uses $d=1$, in which case the
torus is a closed curve.

\subsection{Linear dynamics}

The calculation of invariant foliations depends on the linear dynamics
about the invariant torus. Let us define
\[
\boldsymbol{A}\left(\boldsymbol{\theta}\right)=D_{1}\boldsymbol{F}\left(\boldsymbol{K}\left(\boldsymbol{\theta}\right),\boldsymbol{\theta}\right)
\]
and consider the linearised system 
\begin{equation}
\begin{array}{rl}
\boldsymbol{x}_{k+1} & \negthickspace\negthickspace\negthickspace=\boldsymbol{A}\left(\boldsymbol{\theta}_{k}\right)\boldsymbol{x}_{k}\\
\boldsymbol{\theta}_{k+1} & \negthickspace\negthickspace\negthickspace=\boldsymbol{\theta}_{k}+\boldsymbol{\omega}
\end{array},\;k=1,2,\ldots.\label{eq:ED-linsys-1}
\end{equation}
Instead of eigenvectors and eigenvalues, we need to use invariant
vector bundles to decompose the dynamics of the linear system \ref{eq:ED-linsys-1}.
Invariant foliations require the use of left vector bundles that satisfy
the linear invariance equation 
\begin{equation}
\boldsymbol{\Lambda}_{j}\left(\boldsymbol{\theta}\right)\boldsymbol{U}_{j}\left(\boldsymbol{\theta}\right)=\boldsymbol{U}_{j}\left(\boldsymbol{\theta}+\boldsymbol{\omega}\right)\boldsymbol{A}\left(\boldsymbol{\theta}\right),\label{eq:ED-left-bundle}
\end{equation}
where $\boldsymbol{U}_{j}:\mathbb{T}^{d}\to\mathcal{L}\left(X,Z_{j}\right)$
and $\boldsymbol{\Lambda}_{j}:\mathbb{T}^{d}\to\mathcal{L}\left(Z_{j},Z_{j}\right)$
are analytic matrix valued functions and $Z_{j}$ are a low-dimensional
vector spaces. A complete decomposition means that $X$ is isomorphic
to $\bigotimes_{j=1}^{m}Z_{j}$, and $\bigoplus_{j=1}^{m}\left(\ker\boldsymbol{U}_{j}\left(\boldsymbol{\theta}\right)\right)^{\perp}=X$
for all $\boldsymbol{\theta}\in\mathbb{T}^{d}$. To characterise the
linear dynamics, we use exponential dichotomies. The matrix $\boldsymbol{\Lambda}_{j}$
has an exponential dichotomy for $\rho\in\mathbb{R}^{+}$ if there
exists $C>0$ such that the inequalities 
\begin{align*}
\left|\boldsymbol{\Lambda}_{j}\left(\boldsymbol{\theta}+\left(k-1\right)\boldsymbol{\omega}\right)\cdots\boldsymbol{\Lambda}_{j}\left(\boldsymbol{\theta}+\boldsymbol{\omega}\right)\boldsymbol{\Lambda}_{j}\left(\boldsymbol{\theta}\right)\right| & \le C\rho^{k}, & \text{and}\\
\left|\boldsymbol{\Lambda}_{j}^{-1}\left(\boldsymbol{\theta}-k\boldsymbol{\omega}\right)\cdots\boldsymbol{\Lambda}_{j}^{-1}\left(\boldsymbol{\theta}-\boldsymbol{\omega}\right)\right| & \le C\rho^{-k}, & \forall k\in\mathbb{N}
\end{align*}
hold. Our spectral decomposition is such that $\boldsymbol{\Lambda}_{j}$
do not have exponential dichotomy when $\rho\in\Sigma_{j}=\left[\alpha_{j},\beta_{j}\right]$
but does anywhere else in $\mathbb{R}^{+}$. The sets $\Sigma_{j}=\left[\alpha_{j},\beta_{j}\right]$
are called spectral intervals, that are pairwise disjoint, i.e. $\Sigma_{j}\cap\Sigma_{k}=\emptyset$
for $j\neq k$. We denote the number of spectral intervals by $m$.

For a constant matrix $\boldsymbol{A}$, the spectral intervals reduce
to points ($\alpha_{j}=\beta_{j}$), which are the magnitudes of the
eigenvalues of $\boldsymbol{A}$. In addition, the row vectors of
$\boldsymbol{U}_{j}$ span the same space as the left eigenvectors
of $\boldsymbol{A}$ corresponding to the eigenvalues that have the
magnitude $\alpha_{j}=\beta_{j}$. In section \ref{subsec:Bundles}
we detail how to find $\boldsymbol{\Lambda}_{j}$ and $\boldsymbol{U}_{j}$
numerically by discretising \eqref{eq:ED-left-bundle} and turning
it into an eigenvalue problem.

\subsection{\label{subsec:Foliations}Invariant foliation}

An invariant foliation is the solution of the invariance equation
\begin{equation}
\boldsymbol{R}\left(\boldsymbol{U}\left(\boldsymbol{x},\boldsymbol{\theta}\right),\boldsymbol{\theta}\right)=\boldsymbol{U}\left(\boldsymbol{F}\left(\boldsymbol{x},\boldsymbol{\theta}\right),\boldsymbol{\theta}+\boldsymbol{\omega}\right),\label{eq:FOIL-invariance}
\end{equation}
which also corresponds to the commutative diagram in figure \ref{fig:straight-commutative}(a).
The functions $\boldsymbol{R}:Z\times\mathbb{T}^{d}\to Z$ and $\boldsymbol{U}:X\times\mathbb{T}^{d}\to Z$
in \eqref{eq:FOIL-invariance} are real analytic. In order to identify
our foliation of interest, we select a number of spectral intervals
using the index set 
\begin{equation}
\mathcal{\mathcal{I}}=\left\{ i_{1},i_{2},\ldots,i_{\#\mathcal{I}}\right\} \label{eq:ED-index-set}
\end{equation}
and stipulate that
\begin{equation}
D\boldsymbol{R}\left(\boldsymbol{0},\boldsymbol{\theta}\right)=\begin{pmatrix}\boldsymbol{\Lambda}_{i_{1}}\left(\boldsymbol{\theta}\right) &  & \boldsymbol{0}\\
 & \ddots\\
\boldsymbol{0} &  & \boldsymbol{\Lambda}_{i_{\#\mathcal{I}}}\left(\boldsymbol{\theta}\right)
\end{pmatrix}\;\text{and}\;D\boldsymbol{U}\left(\boldsymbol{K}\left(\boldsymbol{\theta}\right),\boldsymbol{\theta}\right)=\begin{pmatrix}\boldsymbol{U}_{i_{1}}\left(\boldsymbol{\theta}\right)\\
\vdots\\
\boldsymbol{U}_{i_{\#\mathcal{I}}}\left(\boldsymbol{\theta}\right)
\end{pmatrix}.\label{eq:FOIL-BC}
\end{equation}
This means that the vector space of the conjugate dynamics $\boldsymbol{R}$
is $Z=\bigotimes_{j\in\mathcal{I}}Z_{j}$. We also make the normalising
assumption that $\boldsymbol{U}\left(\boldsymbol{K}\left(\boldsymbol{\theta}\right),\boldsymbol{\theta}\right)=\boldsymbol{0}$,
from which it follows that $\boldsymbol{R}\left(\boldsymbol{0},\boldsymbol{\theta}\right)=\boldsymbol{0}$.
In what follows, we concurrently calculate the complementary foliation
for index set $\mathcal{I}^{c}=\left\{ 1,\ldots,m\right\} \setminus\mathcal{I}$
using the invariance equation 
\begin{equation}
\boldsymbol{S}\left(\boldsymbol{V}\left(\boldsymbol{x},\boldsymbol{\theta}\right),\boldsymbol{\theta}\right)=\boldsymbol{V}\left(\boldsymbol{F}\left(\boldsymbol{x},\boldsymbol{\theta}\right),\boldsymbol{\theta}+\boldsymbol{\omega}\right)\label{eq:FOIL-compl-inv}
\end{equation}
with similar boundary conditions except that $\mathcal{I}$ is replaced
by $\mathcal{I}^{c}$. We also define the vector space $Z^{c}=\bigotimes_{j\in\mathcal{I}^{c}}Z_{j}$
and therefore the functions in equation \eqref{eq:FOIL-compl-inv}
are $\boldsymbol{S}:Z^{c}\times\mathbb{T}^{d}\to Z^{c}$ and $\boldsymbol{V}:X\times\mathbb{T}^{d}\to Z^{c}$.

The functions $\boldsymbol{U}$ and $\boldsymbol{R}$ are not unique,
even when the foliation itself is unique. This is because any invertible
transformation $\boldsymbol{T}:Z\times\mathbb{T}^{d}\to Z$ can create
a new encoder $\tilde{\boldsymbol{U}}\left(\boldsymbol{x},\boldsymbol{\theta}\right)=\boldsymbol{T}\left(\boldsymbol{U}\left(\boldsymbol{x},\boldsymbol{\theta}\right),\boldsymbol{\theta}\right)$
and conjugate map $\tilde{\boldsymbol{R}}\left(\boldsymbol{z},\boldsymbol{\theta}\right)=$$\boldsymbol{T}\left(\boldsymbol{R}\left(\boldsymbol{T}^{-1}\left(\boldsymbol{x},\boldsymbol{\theta}\right),\boldsymbol{\theta}\right),\boldsymbol{\theta}+\boldsymbol{\omega}\right)$,
which satisfy the invariance equation \eqref{eq:FOIL-invariance}.
A simple restriction to make the solution of \eqref{eq:FOIL-invariance}
unique requires that 
\begin{equation}
\boldsymbol{U}\left(\boldsymbol{W}\left(\boldsymbol{\mathbf{\theta}}\right)\boldsymbol{z}\right)=\boldsymbol{z},\label{eq:FOIL-graph-constr}
\end{equation}
where $\boldsymbol{W}$ satisfies the invariance equation 
\begin{equation}
\boldsymbol{A}\left(\boldsymbol{\theta}\right)\boldsymbol{W}\left(\boldsymbol{\theta}\right)=\boldsymbol{W}\left(\boldsymbol{\theta}+\boldsymbol{\omega}\right)D\boldsymbol{R}\left(\boldsymbol{0},\boldsymbol{\theta}\right)\label{eq:ED-righ-bundle}
\end{equation}
or just simply $\boldsymbol{W}\left(\boldsymbol{\mathbf{\theta}}\right)D\boldsymbol{V}\left(\boldsymbol{K}\left(\boldsymbol{\theta}\right),\boldsymbol{\theta}\right)=\boldsymbol{0}$.
It is not necessary that $\boldsymbol{W}$ is accurately calculated,
as the only requirement is that $\boldsymbol{W}\left(\boldsymbol{\mathbf{\theta}}\right)D\boldsymbol{U}\left(\boldsymbol{K}\left(\boldsymbol{\theta}\right),\boldsymbol{\theta}\right)$
is invertible for all $\boldsymbol{\theta}\in\mathbb{T}^{d}$. In
what follows, we will only restrict the nonlinear part of $\boldsymbol{U}$,
so that $\boldsymbol{U}_{nl}\left(\boldsymbol{W}\left(\boldsymbol{\mathbf{\theta}}\right)\boldsymbol{z}\right)=\boldsymbol{0}$
for an approximate $\boldsymbol{W}$. Using $\boldsymbol{W}$ that
satisfies \eqref{eq:ED-righ-bundle} leads to a so-called graph-style
parametrisation akin to how invariant manifolds are parametrised in
\cite[Theorem 1.2]{CabreLlave2003}.
\begin{rem}
It is important to think about the required value for parameter $d$,
because an unnecessarily large number increases computational costs.
A periodically forced system needs a single angle variable if the
forcing frequency is not an integer multiple of the sampling frequency.
However if the forcing frequency is an integer multiple of the sampling
frequency, there is no need for an angle variable at all. Similarly,
when there are multiple rationally unrelated forcing frequencies and
one of the forcing frequencies is an integer multiple of the sampling
frequency, we can use one less angle variables than the number of
forcing frequencies. In reality, it might not be possible or desirable
to synchronise forcing and sampling. In order to reproduce finer details
of the signal and resolve higher harmonics, the sampling frequency
cannot be a fraction of the forcing frequency, it must be at least
twice as high as the lowest resolved frequency component, according
to the Shannon-Nyquist theorem \cite{ShannonNyquist}. In these cases
$d$ must equal the number of rationally unrelated forcing frequencies.
The case of a single forcing frequency is however special, because
regardless of sampling, Floquet theory \cite{chicone2008ordinary}
applies to the underlying continuous-time system. This means that
a time-dependent coordinate transformation can eliminate the angle
variable from the linear part of \eqref{eq:MAPSysDef-1} about the
a periodic orbit. Making the nonlinear terms autonomous requires non-resonance
conditions that we outline later.
\end{rem}

A foliation might not exist if there are resonances among the spectral
intervals. Even though we have assumed an analytic system, an invariant
foliation can be finitely many times differentiable or only continuous.
The theorem below states that among all possible foliations there
is always an analytic one, which is unique.
\begin{thm}
Assume an invariant torus \eqref{eq:TOR-geometry} and a linearised
system about the torus in the form of \eqref{eq:ED-linsys-1}. Also
assume that the linear system has dichotomy spectral intervals $\Sigma_{j}=\left[\alpha_{j},\beta_{j}\right]$,
$1\le j\le m$ and that $\max\beta_{j}<1$. Pick one or more spectral
intervals using a non-empty index set 
\[
\mathcal{I}\subset\left\{ 1,2,\ldots,m\right\} 
\]
with the condition that $\alpha_{j}\neq0$ for all $j\in\mathcal{I}$
so that linear system \eqref{eq:ED-linsys-1} restricted to the subset
of the spectrum 
\[
\Sigma_{\mathcal{F}}=\bigcup_{j\in\mathcal{I}}\Sigma_{j}
\]
is invertible. Define the spectral quotient as 
\[
\beth_{\mathcal{I}}=\frac{\min_{j\in\mathcal{I}}\log\alpha_{j}}{\log\beta_{m}}.
\]
If the non-resonance conditions 
\begin{equation}
1\notin\left[\beta_{i_{0}}^{-1}\alpha_{i_{1}}\cdots\alpha_{i_{j}},\alpha_{i_{0}}^{-1}\beta_{i_{1}}\cdots\beta_{i_{j}}\right]\label{eq:FOIL-non-resonance-1}
\end{equation}
hold for $i_{0}\in\mathcal{I}$, $i_{1},\ldots,i_{j}\in\left\{ 1,2,\ldots,m\right\} $
and $2\le j<\beth_{\mathcal{I}}+1$ with some $i_{k}\notin\mathcal{I}$
then
\begin{enumerate}
\item there exists a unique invariant foliation defined by analytic functions
$\boldsymbol{U}$ and $\boldsymbol{R}$ satisfying equation \eqref{eq:FOIL-invariance}
in a sufficiently small neighbourhood of the torus $\mathcal{T}$
such that equations \eqref{eq:FOIL-BC} hold;
\item the nonlinear map $\boldsymbol{R}$ is a polynomial, which in its
simplest form contains terms for which the internal non-resonance
conditions \eqref{eq:FOIL-non-resonance-1} with $i_{0},i_{1},\ldots,i_{j}\in\mathcal{I}$
and $2\le j<\beth_{\mathcal{I}}+1$ does not hold.
\end{enumerate}
\end{thm}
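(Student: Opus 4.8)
The plan is to adapt the parametrisation method of \cite{Szalai2020ISF,Szalai2023Fol} (see also \cite{delaLlave1997,Haro2016}) to the forced setting, replacing the constant matrices of the fixed-point case by the analytic linear cocycles of \eqref{eq:ED-linsys-1} over the rotation $\boldsymbol{\theta}\mapsto\boldsymbol{\theta}+\boldsymbol{\omega}$ and the eigenvalue moduli by the dichotomy intervals $\Sigma_{j}$. First I would move the torus to the origin by $\boldsymbol{\xi}=\boldsymbol{x}-\boldsymbol{K}(\boldsymbol{\theta})$, which by \eqref{eq:TOR-invariances} turns \eqref{eq:MAPSysDef-1} into $\boldsymbol{\xi}_{k+1}=\widehat{\boldsymbol{F}}(\boldsymbol{\xi}_{k},\boldsymbol{\theta}_{k})$ with $\widehat{\boldsymbol{F}}(\boldsymbol{0},\boldsymbol{\theta})=\boldsymbol{0}$, $D_{1}\widehat{\boldsymbol{F}}(\boldsymbol{0},\boldsymbol{\theta})=\boldsymbol{A}(\boldsymbol{\theta})$ and an analytic Taylor series $\widehat{\boldsymbol{F}}(\boldsymbol{\xi},\boldsymbol{\theta})=\boldsymbol{A}(\boldsymbol{\theta})\boldsymbol{\xi}+\sum_{p\ge2}\boldsymbol{F}_{p}(\boldsymbol{\theta})\boldsymbol{\xi}^{\otimes p}$ with $\|\boldsymbol{F}_{p}\|$ uniformly (in $\boldsymbol{\theta}$) geometrically bounded. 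I then seek $\boldsymbol{U}(\boldsymbol{\xi},\boldsymbol{\theta})=\sum_{p\ge1}\boldsymbol{U}_{p}(\boldsymbol{\theta})\boldsymbol{\xi}^{\otimes p}$ and $\boldsymbol{R}(\boldsymbol{z},\boldsymbol{\theta})=D\boldsymbol{R}(\boldsymbol{0},\boldsymbol{\theta})\boldsymbol{z}+\sum_{p\ge2}\boldsymbol{R}_{p}(\boldsymbol{\theta})\boldsymbol{z}^{\otimes p}$ whose linear parts $\boldsymbol{U}_{1}$, $D\boldsymbol{R}(\boldsymbol{0},\cdot)$ are the data fixed by \eqref{eq:FOIL-BC}; these are precisely the linear invariance relations \eqref{eq:ED-left-bundle}, so the order-one part of \eqref{eq:FOIL-invariance} holds for free and the recursion begins at $p=2$. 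Substituting the series into \eqref{eq:FOIL-invariance} and collecting the terms homogeneous of degree $p\ge2$ in $\boldsymbol{\xi}$ gives, schematically, the linear cohomological equation
\[
\mathcal{L}_{p}\boldsymbol{U}_{p}:=D\boldsymbol{R}(\boldsymbol{0},\cdot)\,\boldsymbol{U}_{p}(\cdot)-\boldsymbol{U}_{p}(\cdot+\boldsymbol{\omega})\,\boldsymbol{A}(\cdot)^{\otimes p}=\boldsymbol{U}_{1}(\cdot+\boldsymbol{\omega})\,\boldsymbol{F}_{p}-\boldsymbol{R}_{p}\circ\boldsymbol{U}_{1}^{\otimes p}+\boldsymbol{G}_{p},
\]
with $\boldsymbol{G}_{p}$ depending only on $\boldsymbol{U}_{q},\boldsymbol{R}_{q},\boldsymbol{F}_{q}$, $q<p$, and unknowns the analytic sections $\boldsymbol{U}_{p}:\mathbb{T}^{d}\to\mathcal{L}(X^{\otimes p},Z)$, $\boldsymbol{R}_{p}:\mathbb{T}^{d}\to\mathcal{L}(Z^{\otimes p},Z)$.

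The key linear fact is that $\mathcal{L}_{p}$ is block diagonal over the decomposition of $Z$ into the factors $Z_{i_{0}}$ ($i_{0}\in\mathcal{I}$) and of $X^{\otimes p}$ into the invariant bundles of $\boldsymbol{A}(\cdot)^{\otimes p}$, which are indexed by $(i_{1},\dots,i_{p})\in\{1,\dots,m\}^{p}$ with dichotomy spectrum in the interval product $\Sigma_{i_{1}}\cdots\Sigma_{i_{p}}$; since $D\boldsymbol{R}(\boldsymbol{0},\cdot)$ has dichotomy spectrum $\bigcup_{j\in\mathcal{I}}\Sigma_{j}$ and is an invertible cocycle (here the assumption $\alpha_{j}\neq0$ for $j\in\mathcal{I}$ is used), the Green's-function series built from the two halves of the exponential dichotomy shows that the $(i_{0};i_{1},\dots,i_{p})$ block of $\mathcal{L}_{p}$ is boundedly invertible on analytic sections exactly when $1$ misses the quotient of $\Sigma_{i_{0}}$ by $\Sigma_{i_{1}}\cdots\Sigma_{i_{p}}$, i.e. precisely when \eqref{eq:FOIL-non-resonance-1} holds for that multi-index with $j=p$. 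I then induct on $p$. A block with some $i_{k}\notin\mathcal{I}$ is invertible by \eqref{eq:FOIL-non-resonance-1} when $2\le p<\beth_{\mathcal{I}}+1$, and automatically when $p\ge\beth_{\mathcal{I}}+1$, because then $p\log\beta_{m}<\beth_{\mathcal{I}}\log\beta_{m}=\min_{j\in\mathcal{I}}\log\alpha_{j}\le\log\alpha_{i_{0}}$, hence $\alpha_{i_{0}}^{-1}\beta_{i_{1}}\cdots\beta_{i_{p}}\le\alpha_{i_{0}}^{-1}\beta_{m}^{\,p}<1$; on such a block I set $\boldsymbol{R}_{p}$ to zero and solve uniquely for $\boldsymbol{U}_{p}$. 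A block with all $i_{k}\in\mathcal{I}$ is handled the same way when the internal version of \eqref{eq:FOIL-non-resonance-1} holds, and when it fails I let that block of $\boldsymbol{R}_{p}$ equal the obstruction (the projection of the right-hand side onto the non-invertible summand), which makes the equation solvable, and I remove the leftover kernel ambiguity of $\boldsymbol{U}_{p}$ by the normalisation $\boldsymbol{U}_{nl}(\boldsymbol{W}(\boldsymbol{\theta})\boldsymbol{z})=\boldsymbol{0}$ with $\boldsymbol{W}$ a right bundle from \eqref{eq:FOIL-graph-constr} and \eqref{eq:ED-righ-bundle}. Since the same estimate makes every block non-resonant once $p\ge\beth_{\mathcal{I}}+1$, we get $\boldsymbol{R}_{p}=\boldsymbol{0}$ there, so $\boldsymbol{R}$ is a polynomial whose monomials are exactly the internally resonant ones of order below $\beth_{\mathcal{I}}+1$; this is part 2.

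Because the product intervals $\Sigma_{i_{1}}\cdots\Sigma_{i_{p}}$ recede to $0$ as $p\to\infty$, the bound on $\|\mathcal{L}_{p}^{-1}\|$ stays uniform in $p$ — only finitely many low orders being delicate — and, fed together with the geometric bound on $\|\boldsymbol{F}_{p}\|$ and the Cauchy/product estimates for $\boldsymbol{G}_{p}$ into a standard majorant induction, it yields $\|\boldsymbol{U}_{p}\|\le M\varrho^{p}$ for suitable $M,\varrho>0$; hence $\boldsymbol{U}$ converges and is analytic on a neighbourhood of $\mathcal{T}$, which with the finiteness above gives part 1. Uniqueness among analytic pairs satisfying \eqref{eq:FOIL-BC} and the normalisation follows by subtracting two solutions: order by order the difference obeys $\mathcal{L}_{p}\,\Delta\boldsymbol{U}_{p}=\boldsymbol{0}$ with the resonant part of $\Delta\boldsymbol{R}_{p}$ pinned to zero by the normalisation, so injectivity of each block forces $\Delta\boldsymbol{U}_{p}=\boldsymbol{0}$ for all $p$ and the two germs coincide. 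I expect the main obstacle to be the analytic heart of the construction: proving that the cohomological operators $\mathcal{L}_{p}$ are boundedly invertible on spaces of analytic sections over $\mathbb{T}^{d}$ — equivalently, obtaining Green's-function estimates for the tensor cocycles $\boldsymbol{A}(\cdot)^{\otimes p}$ — with enough uniformity in $p$ to close the majorant step; in the constant-coefficient case of \cite{Szalai2023Fol} this is finite-dimensional linear algebra, whereas over the rotation it rests genuinely on the exponential-dichotomy machinery of Section \ref{sec:theory}, which is what makes the statement more than a transcription of the fixed-point theorem.
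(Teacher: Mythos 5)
The paper itself contains no proof of this theorem: it simply defers to the companion paper \cite{SzalaiForcedTheory2024} for the details, so a line-by-line comparison is not possible. Your outline faithfully reproduces the parametrisation-method architecture that this paper (and its companion) invoke — expansion of \eqref{eq:FOIL-invariance} in powers of the transverse variable, block cohomological equations solved via the exponential-dichotomy Green's function, non-resonant cross blocks solved uniquely, internally resonant blocks absorbed into $\boldsymbol{R}_{p}$, automatic non-resonance beyond order $\beth_{\mathcal{I}}+1$ giving the polynomial form, and a majorant estimate for analyticity — so in approach it coincides with the cited proof; the one place you should be careful to tighten is your implicit assumption that $\boldsymbol{A}(\cdot)^{\otimes p}$ admits a clean invariant-bundle splitting indexed by $(i_{1},\dots,i_{p})$, since the dichotomy spectra of the tensor cocycle are only \emph{contained in} the interval products $\Sigma_{i_{1}}\cdots\Sigma_{i_{p}}$ and distinct multi-indices can produce overlapping intervals, so the bundle decomposition and the block-wise bounded invertibility of $\mathcal{L}_{p}$ need to be argued at the level of the coarser merged spectral intervals rather than the fine product labelling.
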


\begin{proof}
The details can be found in the paper \cite{SzalaiForcedTheory2024}.
\end{proof}

\section{\label{sec:numerics}The data and identifying the foliation}

This section describes the numerical methods that lead to the identification
and analysis of our ROM. To simplify the numerical method, we assume
a one-dimensional torus. It is possible to use more frequencies at
the expense of higher computational costs. The data we consider come
in triplets $\boldsymbol{x}^{k}$, $\boldsymbol{y}^{k}$, $\theta^{k}$,
such that 
\begin{equation}
\boldsymbol{y}^{k}=\boldsymbol{F}\left(\boldsymbol{x}^{k},\theta^{k}\right)+\boldsymbol{\xi}^{k},\qquad k=1\ldots N,\label{eq:DATA-create}
\end{equation}
where $\boldsymbol{\xi}^{k}$ is a small perturbation drawn from a
probability distribution with zero mean. Equation \eqref{eq:DATA-create}
is the unknown part of the dynamics, i.e., map $\boldsymbol{F}$ is
unknown. We assume that the external forcing is known, which is the
second line of equation \eqref{eq:MAPSysDef-1} with a fixed $\omega\in\mathbb{T}$.
When considering trajectories for some set of consecutive indices
we have $\boldsymbol{x}^{k+1}=\boldsymbol{y}^{k}$ and $\theta^{k+1}=\theta^{k}+\omega$.

The notation we use from now on for matrix multiplications and tensor
contractions is the following. A matrix multiplication $\boldsymbol{C}=\boldsymbol{A}\boldsymbol{B}$
is denoted by $C_{il}=A_{ij}B_{jl}$. So whenever an index appears
more than once on one side of an equation it is summed over. When
we do not want to sum over an index, even if it appears in multiple
factors, we underline the specific index, for example $C_{ilk}=A_{ij\underline{k}}B_{jl\underline{k}}$.
The index notation implicitly assumes the existence of an orthonormal
basis, for example a vector can be written as $\boldsymbol{x}=x_{i}\boldsymbol{e}_{i}\in X$,
where $\boldsymbol{e}_{i}\in X$, $i\in\left\{ 1,\ldots,n\right\} $
is a set of orthonormal basis vectors. Orthonormality allows us to
use the same vectors for the dual basis, which greatly simplifies
our notation.

\subsection{Discretisation and shift along the torus}

We use Fourier collocation \cite{trefethen} to resolve functions
on the torus $\mathbb{T}$. Fourier collocation demands a uniform
grid, which is given by the nodes

\begin{equation}
\vartheta_{1}=0,\ldots,\vartheta_{2\ell+1}=\frac{2\ell}{2\ell+1}2\pi,\label{eq:GRID}
\end{equation}
where $\ell$ is the number of harmonics to be resolved. A frequency
limited function $\boldsymbol{x}:\mathbb{T}\to\mathbb{R}^{n}$ can
be reconstructed using 
\[
\boldsymbol{x}\left(\theta\right)=\sum_{j=1}^{2\ell+1}\gamma\left(\theta-\vartheta_{j}\right)\boldsymbol{x}_{j},
\]
where $\boldsymbol{x}_{j}=\boldsymbol{x}\left(\vartheta_{j}\right)$
and
\[
\gamma\left(\theta\right)=\frac{1}{2\ell+1}\frac{\sin\frac{2\ell+1}{2}\theta}{\sin\frac{1}{2}\theta}.
\]
A function can also be represented by a two-index array $x_{ij}=x_{i}\left(\vartheta_{j}\right)$,
where $i$ is the coordinate index of $\boldsymbol{x}$ in our basis
$\boldsymbol{e}_{i}$. Interpolating a function at data points $\theta^{k}$
can be written as $x_{i}\left(\theta^{k}\right)=x_{ij}t_{j}^{k},$where
$t_{j}^{k}=\gamma\left(\theta^{k}-\vartheta_{j}\right)$. Similarly,
interpolating a matrix-valued function $\boldsymbol{A}:\mathbb{T}\to\mathbb{R}^{n\times n}$
at the grid points can be written as $A_{ij}\left(\theta^{k}\right)=A_{ijl}t_{l}^{k}$.

A common operation with forced discrete-time systems is to shift a
signal to the right by the angle $\omega$. The shift operator $\mathcal{S}^{\omega}$
is defined as
\[
\left(\mathcal{S}^{\omega}\boldsymbol{x}\right)\left(\theta\right)=\boldsymbol{x}\left(\theta-\omega\right).
\]
The discrete version of the shift operator is calculated from 
\begin{align*}
\boldsymbol{y}_{i}=\left(\mathcal{S}^{\omega}\boldsymbol{x}\right)\left(\vartheta_{i}\right) & =\sum_{j=1}^{2\ell+1}\gamma\left(\vartheta_{i}-\vartheta_{j}-\omega\right)\boldsymbol{x}_{j}.
\end{align*}
We then introduce the notation 
\[
\mathds{S}_{jk}^{\omega}=\gamma\left(\vartheta_{k}-\vartheta_{j}-\omega\right),
\]
so that a shifted representation of a function becomes $y_{il}=x_{ij}\mathds{S}_{jl}^{\omega}$.
Note that $\left(\mathds{S}^{\omega}\right)^{T}=\mathds{S}^{-\omega}$,
because function $\gamma$ is even.

\subsection{\label{subsec:LINID}Approximate invariant torus and nearby linear
dynamics}

In order to find the torus and the linear dynamics about the torus,
we assume the following model

\[
\boldsymbol{F}\left(\boldsymbol{x},\theta\right)=\boldsymbol{A}\left(\theta\right)\boldsymbol{x}+\boldsymbol{b}\left(\theta\right),
\]
where 
\[
\boldsymbol{A}\left(\boldsymbol{\theta}\right)=\sum_{k=1}^{2\ell+1}\gamma\left(\theta-\vartheta_{k}\right)\boldsymbol{A}_{k}.
\]
The linear model identification is iterative: we first use all data
points and then remove data that is far from the initial guess of
the torus to refine both the torus and the linear model near the torus.
We also use scaling so that data points closer to the torus bear greater
importance than data further from the torus. The initial model parameters
are found by solving the optimisation problem
\begin{equation}
\arg\min_{\boldsymbol{A},\boldsymbol{b}}\frac{1}{2}\sum_{k=1}^{N}\left\Vert \boldsymbol{A}\left(\theta^{k}\right)\boldsymbol{x}^{k}+\boldsymbol{b}\left(\theta^{k}\right)-\boldsymbol{y}^{k}\right\Vert ^{2}.\label{eq:LINID-noscale}
\end{equation}
After a solution to \eqref{eq:LINID-noscale} we remove some of the
data that are not close to the torus in Euclidean norm (the particular
amount is problem dependent). The value of the norm in \eqref{eq:LINID-noscale}
vanishes on the invariant torus and therefore the fitting error near
the torus would be assigned less importance without scaling. In order
to make the significance of the relative fitting error approximately
uniform over the data we introduce a scaling factor for each data
point
\begin{equation}
\delta^{k}=\frac{1}{\epsilon^{2}+\left\Vert \boldsymbol{x}^{k}-\boldsymbol{K}\left(\theta^{k}\right)\right\Vert ^{2}},\label{eq:LINID-scalefactor}
\end{equation}
where $\epsilon>0$ is a small number, which we choose to be $\epsilon=2^{-8}$.
Factors $\delta^{k}$ are updated after each iteration, otherwise
they are considered constants. We apply the scaling factor $\delta^{k}$
to each term in \eqref{eq:LINID-noscale} and re-write it using our
index notation into 
\begin{equation}
\arg\min_{\boldsymbol{A},\boldsymbol{b}}\frac{1}{2}\sum_{k=1}^{N}\delta^{k}\left\Vert A_{ijl}x_{j}^{k}t_{l}^{k}+b_{il}t_{l}^{k}-y_{i}^{k}\right\Vert ^{2},\label{eq:LINID-scale}
\end{equation}
which is a linear regression problem and can be solved directly. To
elaborate this point, we define $\tilde{A}_{ijl}=A_{ijl}$, $\tilde{A}_{i\left(n+1\right)l}=b_{il}$
and $\tilde{x}_{jl}^{k}=x_{j}^{k}t_{l}^{k}$, $\tilde{x}_{\left(n+1\right)l}^{k}=t_{l}^{k}$
so that the loss function becomes 
\[
\arg\min_{\boldsymbol{A},\boldsymbol{b}}\frac{1}{2}\sum_{k=1}^{N}\delta^{k}\left|\tilde{A}_{ijl}\tilde{x}_{jl}^{k}-y_{i}^{k}\right|^{2},
\]
which is a standard least squares problem \cite{boyd_vandenberghe_2018}
and has the solution of 
\[
\tilde{\boldsymbol{A}}=\boldsymbol{Y}\boldsymbol{X}^{-1},
\]
where 
\begin{align*}
X_{ijpq} & =N^{-1}\sum_{k=1}^{N}\delta^{k}\tilde{x}_{ij}^{k}\tilde{x}_{pq}^{k},\\
Y_{ipq} & =N^{-1}\sum_{k=1}^{N}\delta^{k}y_{i}^{k}\tilde{x}_{pq}^{k}.
\end{align*}
The iteration stops when the update to $\tilde{\boldsymbol{A}}$ is
smaller than machine precision.

The torus represented by function $\boldsymbol{K}$ is calculated
from the invariance equation
\[
\boldsymbol{K}\left(\theta+\omega\right)=\boldsymbol{A}\left(\boldsymbol{\theta}\right)\boldsymbol{K}\left(\theta\right)+\boldsymbol{b}\left(\boldsymbol{\theta}\right)
\]
that can be written using index notation as
\begin{align}
K_{ij}\mathds{S}_{jl}^{-\omega} & =A_{ip\underline{l}}K_{p\underline{l}}+b_{il}.\label{eq:LINID-torus-tens}
\end{align}
Equation \eqref{eq:LINID-torus-tens} is a Sylvester equation \cite{BartelsStewartAlg432}
and can be further transformed into the matrix-vector equation 
\[
\left(\delta_{ip}\mathds{S}_{lq}^{\omega}-A_{ipq}\right)K_{pq}=b_{il}
\]
that is solved using standard techniques for $\boldsymbol{K}$ that
represents the invariant torus.

\subsection{\label{subsec:Bundles}Invariant vector bundles about the torus}

We now want to solve the invariance equation \eqref{eq:ED-left-bundle}
on the grid \eqref{eq:GRID}, which can be done by calculating the
eigenvalues and eigenvectors of a discretised variant of \eqref{eq:ED-left-bundle}.
Such calculation assumes that $\boldsymbol{\Lambda}_{j}$ is constant.
Note that we can only assume a constant $\boldsymbol{\Lambda}_{j}$,
when system \eqref{eq:ED-linsys-1} is reducible. It is however difficult
to tell if a system is reducible numerically, so we will only observe
the numerics breaking down. When fitting the invariant foliation to
data, we mitigate problems arising from near irreducibility by normalising
the calculated vector invariant bundle, that eventually makes $\boldsymbol{\Lambda}_{j}$
non-autonomous.

The discretised version of \eqref{eq:ED-left-bundle} is written as
\begin{equation}
\lambda\boldsymbol{u}^{T}\left(\vartheta_{j}\right)=\boldsymbol{u}^{T}\left(\vartheta_{j}+\omega\right)\boldsymbol{A}\left(\vartheta_{j}\right),\label{eq:BUND-eigvals}
\end{equation}
which translates to the eigenvector-eigenvalue problem
\begin{align}
\left(\lambda\delta_{il}\delta_{jk}-\mathds{S}_{j\underline{k}}^{-\omega}A_{il\underline{k}}\right)u_{ij} & =0,\label{eq:BUND-tens-eigvals}
\end{align}
where $\lambda$ is an eigenvalue and $u_{ij}$ is an eigenvector.
The eigenvalues are approximately placed along concentric circles
in the complex plane and we expect that each circle contains an integer
multiple of $2\ell+1$ eigenvalues. Using k-nearest neighbours \cite{biau2015lectures},
we attempt to find $n_{cl}=n$ clusters for the magnitudes of the
eigenvalues $\left|\lambda\right|$, expecting each cluster to have
$2\ell+1$ eigenvalues. If such clusters cannot be found, we decrease
$n_{cl}$ until each cluster has an integer multiple of $2\ell+1$
eigenvalues while $n_{cl}\ge n/2$. It is possible to not find clusters
that match our requirements, because of many reasons, for instance,
the number of Fourier harmonics $\left(\ell\right)$ is too low, the
actual spectral circles are too close to each other to resolve them
numerically or there are multiplicities in the eigenvalues.

From each cluster, we choose the eigenvector that has the smallest
dominant harmonic to form our vector bundle. Let us denote the eigenvalue
and eigenvector pair of index $p$ by $\lambda_{p}$ and $u_{ijp}$,
such that $\boldsymbol{u}_{p}\left(\vartheta_{j}\right)=\boldsymbol{e}_{i}u_{ijp}$.
The number of clusters is $m$ and the set of indices that belong
to cluster $1\le k\le m$ is $\mathrm{cl}\left(k\right)$. The Fourier
components of the eigenvectors are $\tilde{u}_{ilp}=\sum_{j=1}^{2\ell+1}\mathrm{e}^{-il\vartheta_{j}}u_{ijp}$,
where $-\ell\le l\le\ell$. In this notation, the index of the representative
eigenvector and eigenvalue pair is calculated as
\begin{equation}
p_{k}=\arg\min_{p\in\mathrm{cl}\left(k\right)}\left(\arg\max_{l}\sum_{i}\left|\tilde{u}_{ilp}\right|^{2}\right).\label{eq:BUND-sel}
\end{equation}

If cluster $k$ has only $2\ell+1$ eigenvalues, we expect that after
multiplying the eigenvector with a scalar from the complex unit circle,
it becomes a real-valued vector and we set
\begin{equation}
U_{1ij}^{d,k}=u_{ijp_{k}},\;\Lambda^{k}=\lambda_{p_{k}}.\label{eq:BUND-real}
\end{equation}
If this is not the case, we reject the calculation. In the common
case when cluster $k$ contains $2\left(2\ell+1\right)$ eigenvalues,
the real part and the imaginary part of the selected eigenvector represents
the invariant bundle
\begin{equation}
U_{1ij}^{d,k}=\Re u_{ijp_{k}},\;U_{2ij}^{d,k}=\Im u_{ijp_{k}},\;\boldsymbol{\Lambda}^{k}=\begin{pmatrix}\Re\lambda_{p_{k}} & -\Im\lambda_{p_{k}}\\
\Im\lambda_{p_{k}} & \Re\lambda_{p_{k}}
\end{pmatrix}.\label{eq:BUND-complex}
\end{equation}
Given index set $\mathcal{I}$, defined by \eqref{eq:ED-index-set}
and $\mathcal{I}^{c}=\left\{ 1,\ldots,m\right\} \setminus\mathcal{I}=\left\{ j_{1},\ldots,j_{m-\#\mathcal{I}}\right\} $,
we set 
\begin{equation}
\boldsymbol{U}^{d}=\begin{pmatrix}\boldsymbol{U}^{d,i_{1}}\\
\vdots\\
\boldsymbol{U}^{d,i_{\#\mathcal{I}}}
\end{pmatrix},\;\boldsymbol{V}^{d}=\begin{pmatrix}\boldsymbol{U}^{d,j_{0}}\\
\vdots\\
\boldsymbol{U}^{d,j_{m-\#\mathcal{I}}}
\end{pmatrix},\label{eq:BUND-zero-left}
\end{equation}
and 
\[
\boldsymbol{R}^{d}=\begin{pmatrix}\boldsymbol{\Lambda}^{i_{1}} &  & \boldsymbol{0}\\
 & \ddots\\
\boldsymbol{0} &  & \boldsymbol{\Lambda}^{i_{\#\mathcal{I}}}
\end{pmatrix},\;\boldsymbol{S}^{d}=\begin{pmatrix}\boldsymbol{\Lambda}^{j_{1}} &  & \boldsymbol{0}\\
 & \ddots\\
\boldsymbol{0} &  & \boldsymbol{\Lambda}^{j_{m-\#\mathcal{I}}}
\end{pmatrix}.
\]
For numerical reasons, we transform $\boldsymbol{U}^{d}$, $\boldsymbol{V}^{d}$,
so that at each grid point $\vartheta_{l}$ they are orthogonal matrices.
The transformation is carried out via singular value decomposition.
We denote $\boldsymbol{U}_{l}^{d}=\boldsymbol{U}^{d}\left(\vartheta_{l}\right)$
and its singular value decomposition by $\boldsymbol{U}_{l}^{d}=\boldsymbol{G}_{l}^{T}\boldsymbol{\Sigma}_{l}\boldsymbol{H}_{l}$
and define $\boldsymbol{U}_{l}^{\boxempty}=\boldsymbol{G}_{l}^{T}\boldsymbol{H}_{l}$.
The transformation that brings back $\boldsymbol{U}_{l}^{\boxempty}$
into $\boldsymbol{U}_{l}^{d}$ is $\boldsymbol{T}_{l}=\boldsymbol{G}_{l}^{T}\boldsymbol{\Sigma}_{l}\boldsymbol{G}_{l}$,
such that $\boldsymbol{U}_{l}^{d}=\boldsymbol{T}_{l}\boldsymbol{U}_{l}^{\boxempty}$.
The linear invariance equation \eqref{eq:ED-left-bundle} now reads
\begin{align*}
\boldsymbol{R}^{d}\boldsymbol{T}\left(\theta\right)\boldsymbol{U}^{\boxempty}\left(\theta\right) & =\boldsymbol{T}\left(\theta+\omega\right)\boldsymbol{U}^{\boxempty}\left(\theta+\omega\right)\boldsymbol{A}\left(\theta\right)\\
\boldsymbol{T}^{-1}\left(\theta+\omega\right)\boldsymbol{R}^{d}\boldsymbol{T}\left(\theta\right)\boldsymbol{U}^{\boxempty}\left(\theta\right) & =\boldsymbol{U}^{\perp}\left(\theta+\omega\right)\boldsymbol{A}\left(\theta\right)
\end{align*}
and therefore $\boldsymbol{U}^{\boxempty}$ and 
\begin{equation}
\boldsymbol{R}^{\boxempty}\left(\theta\right)=\boldsymbol{T}^{-1}\left(\theta+\omega\right)\boldsymbol{R}^{d}\boldsymbol{T}\left(\theta\right)\label{eq:BUND-Rsquare}
\end{equation}
also satisfy the invariance equation. Using index notation, the transformed
linear map becomes $R_{ijl}^{\boxempty}=T_{ipr}^{-1}\mathbb{S}_{r\underline{l}}^{-\omega}R_{pq}^{d}T_{qj\underline{l}}$.
We carry out the same transformation on $\boldsymbol{V}^{d}$ and
$\boldsymbol{S}^{d}$, to end up with a point-wise orthogonal $\boldsymbol{V}^{\boxempty}$
and $\theta$-dependent $\boldsymbol{S}^{\boxempty}$.

In summary, $\boldsymbol{U}^{\boxempty}$ and $\boldsymbol{V}^{\boxempty}$
define a new coordinate system about the invariant torus in which
the linear dynamics is block-diagonal, but not autonomous. In what
follows, the data will be transformed into this coordinate systems,
so that the functional representation of the identified invariant
foliation is relatively simple.

\subsection{\label{subsec:FOIL-FIT}Finding the invariant foliation}

We now describe how to find the invariant foliation and a more accurate
invariant torus relative to the approximate linear coordinate system
calculated in section \ref{subsec:Bundles}. Here we solve the invariance
equations \eqref{eq:FOIL-invariance} and \eqref{eq:FOIL-compl-inv}
together as they become coupled through scaling. 

For numerical reasons, we transform our data into the time-dependent
coordinate system calculated in section \ref{subsec:Bundles}, which
becomes
\begin{equation}
\begin{array}{rlcrl}
x_{i}^{k\parallel} & =U_{ijq}^{\boxempty}t_{q}^{k}\left(x_{j}^{k}-K_{jl}t_{l}^{k}\right), &  & y_{i}^{k\parallel} & =U_{ijq}^{\boxempty}t_{q}^{k}\left(y_{j}^{k}-K_{jl}\mathbb{S}_{jp}^{-\omega}t_{p}^{k}\right),\\
x_{i}^{k\perp} & =V_{ijq}^{\boxempty}t_{q}^{k}\left(x_{j}^{k}-K_{jl}t_{l}^{k}\right), &  & y_{i}^{k\perp} & =V_{ijq}^{\boxempty}t_{q}^{k}\left(y_{j}^{k}-K_{jl}\mathbb{S}_{jp}^{-\omega}t_{p}^{k}\right).
\end{array}\label{eq:FOIL-datatran}
\end{equation}
The data points are accompanied by the interpolation weights on the
torus, which are
\begin{equation}
t_{j}^{k}=\gamma\left(\theta^{k}-\vartheta_{j}\right),\;t_{j}^{\omega k}=\gamma\left(\theta^{k}+\omega-\vartheta_{j}\right)=\mathbb{S}_{jp}^{-\omega}t_{p}^{k}.\label{eq:FOIL-theta}
\end{equation}
The numerical representation of the encoders present in the invariance
equations \eqref{eq:FOIL-invariance} and \eqref{eq:FOIL-compl-inv}
are given by 
\begin{align*}
\boldsymbol{U}\left(\boldsymbol{x}^{\parallel},\boldsymbol{x}^{\perp},\theta\right) & =\boldsymbol{U}^{c}\left(\theta\right)+\boldsymbol{x}^{\parallel}+\boldsymbol{U}^{l}\left(\theta\right)\boldsymbol{x}^{\perp}+\boldsymbol{U}^{nl}\left(\boldsymbol{x}^{\parallel},\boldsymbol{x}^{\perp},\theta\right),\\
\boldsymbol{V}\left(\boldsymbol{x}^{U},\boldsymbol{x}^{V},\theta\right) & =\boldsymbol{V}^{c}\left(\theta\right)+\boldsymbol{x}^{\perp}+\boldsymbol{V}^{l}\left(\theta\right)\boldsymbol{x}^{\parallel}+\boldsymbol{V}^{nl}\left(\boldsymbol{x}^{\parallel},\boldsymbol{x}^{\perp},\theta\right),
\end{align*}
where 
\begin{gather*}
\boldsymbol{U}^{c}:\mathbb{T}\to Z,\;\boldsymbol{U}^{l}:\mathbb{T}\to L\left(Z^{c},Z\right),\;\boldsymbol{U}^{nl}:Z\times Z^{c}\times\mathbb{T}\to Z,\\
\boldsymbol{V}^{c}:\mathbb{T}\to Z^{c},\;\boldsymbol{V}^{l}:\mathbb{T}\to L\left(Z,Z^{c}\right),\;\boldsymbol{V}^{nl}:Z\times Z^{c}\times\mathbb{T}\to Z^{c}.
\end{gather*}
To make the representation of any given pair of foliations unique,
we normalise the encoders by applying the constraints 
\begin{equation}
\boldsymbol{U}^{nl}\left(\boldsymbol{0},\boldsymbol{x}^{\perp},\theta\right)=\boldsymbol{0}\;\text{and}\;\boldsymbol{V}^{nl}\left(\boldsymbol{x}^{\parallel},\boldsymbol{0},\theta\right)=\boldsymbol{0},\label{eq:FOIL-constraints}
\end{equation}
which is the numerical equivalent of \eqref{eq:FOIL-graph-constr}.
The simplicity of the constraints \eqref{eq:FOIL-constraints} is
a consequence of our coordinate transformation \eqref{eq:FOIL-datatran}.
As discussed in section \ref{subsec:Foliations}, the constraints
\eqref{eq:FOIL-constraints} need not be accurate, in fact there are
many other valid constraints. However, the simplicity of \eqref{eq:FOIL-constraints}
allows us to encode $\boldsymbol{U}^{nl}$ and $\boldsymbol{V}^{nl}$
in a straightforward manner. For example, if $\boldsymbol{U}^{nl}$
is a polynomial, we drop all monomials that only contain components
of $\boldsymbol{x}^{\perp}$. Let us denote $\boldsymbol{x}=\left(\boldsymbol{x}^{\parallel},\boldsymbol{x}^{\perp}\right)$,
then using a tensor product notation we have
\begin{equation}
\boldsymbol{U}^{nl}\left(\boldsymbol{x}^{k\parallel},\boldsymbol{x}^{k\perp},\theta^{k}\right)=\sum_{d=2}^{\sigma}\boldsymbol{U}_{d}^{nl}\boldsymbol{t}^{k}\otimes\boldsymbol{x}^{\parallel}\otimes\boldsymbol{x}^{\otimes d-1},\label{eq:FOIL-Unl}
\end{equation}
which can represent any order-$\sigma$ polynomial that satisfy \eqref{eq:FOIL-constraints}.
The representation \eqref{eq:FOIL-Unl} can either use dense polynomials
or polynomials with compressed tensor coefficients as described in
\cite{Szalai2023Fol}.

For the conjugate dynamics represented by function $\boldsymbol{R}$
in the invariance equations \eqref{eq:FOIL-invariance}, we use a
dense polynomial, because it is a low-dimensional function. For the
conjugate dynamics of the complementary foliation $\boldsymbol{S}$,
we either use a dense polynomial if we are dealing with a low-dimensional
system, or a matrix in high dimensions. In fact, function $\boldsymbol{S}$
can be linear if we consider a small neighbourhood of the invariant
manifold defined by 
\begin{equation}
\mathcal{M}=\left\{ \left(\boldsymbol{x}^{\parallel},\boldsymbol{x}^{\perp},\theta\right)\in X:\boldsymbol{V}\left(\boldsymbol{x}^{\parallel},\boldsymbol{x}^{\perp},\theta\right)=\boldsymbol{0}\right\} .\label{eq:MANIF-implicit}
\end{equation}
This is because in the neighbourhood of $\mathcal{M}$, the magnitude
of $\boldsymbol{V}$ is small. During the solution process of the
invariance equation \eqref{eq:FOIL-compl-inv}, the closer we get
to the accurate solution, the data can be filtered, so that the value
of $\boldsymbol{V}$ over the remaining data set becomes small, as
long as sufficient amount of data remains.

We define the relative error of the invariance equation by 
\begin{equation}
E_{\mathit{rel}}=\frac{\left\Vert \boldsymbol{R}\left(\boldsymbol{U}\left(\boldsymbol{x}^{\parallel},\boldsymbol{x}^{\perp},\theta\right),\theta\right)-\boldsymbol{U}\left(\boldsymbol{y}^{\parallel},\boldsymbol{y}^{\perp},\theta+\omega\right)\right\Vert }{\left\Vert \left(\boldsymbol{x}^{\parallel}-\boldsymbol{K}^{\parallel}\left(\theta\right),\boldsymbol{x}^{\perp}-\boldsymbol{K}^{\perp}\left(\theta\right)\right)\right\Vert },\label{eq:FOIL-Erel}
\end{equation}
where $\boldsymbol{K}^{\parallel}$ and $\boldsymbol{K}^{\perp}$
represent the torus in the transformed coordinate system. The torus
representation is the solution of 
\[
\boldsymbol{U}\left(\boldsymbol{K}^{\parallel}\left(\theta\right),\boldsymbol{K}^{\perp}\left(\theta\right),\theta\right)=\boldsymbol{0},\;\boldsymbol{V}\left(\boldsymbol{K}^{\parallel}\left(\theta\right),\boldsymbol{K}^{\perp}\left(\theta\right),\theta\right)=\boldsymbol{0}.
\]
In order to make sure that the relative error is nearly uniform we
scale the invariance equations \eqref{eq:FOIL-invariance} and \eqref{eq:FOIL-compl-inv}
similar to how $E_{\mathit{rel}}$ is calculated. Let use define
\[
\delta^{k}=1+\frac{1}{\epsilon^{2}+\left\Vert \boldsymbol{x}^{k\parallel}-\boldsymbol{K}^{\parallel}\boldsymbol{t}^{k}\right\Vert ^{2}+\left\Vert \boldsymbol{x}^{k\perp}-\boldsymbol{K}^{\perp}\boldsymbol{t}^{k}\right\Vert ^{2}}
\]
and use the loss function
\begin{equation}
L=\sum_{k=1}^{N}\delta^{k}\left(\left\Vert \boldsymbol{R}\circ\boldsymbol{U}-\boldsymbol{U}^{\omega}\right\Vert ^{2}+\left\Vert \boldsymbol{S}\circ\boldsymbol{V}-\boldsymbol{V}^{\omega}\right\Vert ^{2}\right).\label{eq:CF-loss}
\end{equation}
where 
\begin{align*}
\boldsymbol{U} & :=\boldsymbol{U}\left(\boldsymbol{x}^{k\parallel},\boldsymbol{x}^{k\perp},\boldsymbol{t}^{k}\right), & \boldsymbol{R}\circ\boldsymbol{U} & :=\boldsymbol{R}\left(\boldsymbol{U}\left(\boldsymbol{x}^{k\parallel},\boldsymbol{x}^{k\perp},\boldsymbol{t}^{k}\right),\boldsymbol{t}^{k}\right), & \boldsymbol{U}^{\omega} & :=\boldsymbol{U}\left(\boldsymbol{y}^{k\parallel},\boldsymbol{y}^{k\perp},\boldsymbol{t}^{\omega k}\right),\\
\boldsymbol{V} & :=\boldsymbol{V}\left(\boldsymbol{x}^{k\parallel},\boldsymbol{x}^{k\perp},\boldsymbol{t}^{k}\right), & \boldsymbol{S}\circ\boldsymbol{V} & :=\boldsymbol{S}\left(\boldsymbol{V}\left(\boldsymbol{x}^{k\parallel},\boldsymbol{x}^{k\perp},\boldsymbol{t}^{k}\right),\boldsymbol{t}^{k}\right), & \boldsymbol{V}^{\omega} & :=\boldsymbol{V}\left(\boldsymbol{y}^{k\parallel},\boldsymbol{y}^{k\perp},\boldsymbol{t}^{\omega k}\right)
\end{align*}
with data coming from \eqref{eq:FOIL-datatran} and \eqref{eq:FOIL-theta}.
The initial values of the parameters are such that 
\begin{align*}
\boldsymbol{R}\left(\boldsymbol{z},\theta\right) & =\boldsymbol{R}^{1}\left(\theta\right)\boldsymbol{z},\boldsymbol{U}^{c}\left(\theta\right)=\boldsymbol{0},\boldsymbol{U}^{l}\left(\theta\right)=\boldsymbol{0},\boldsymbol{U}^{nl}\left(\boldsymbol{x}^{\perp},\theta\right)=\boldsymbol{0},\\
\boldsymbol{S}\left(\boldsymbol{z},\theta\right) & =\boldsymbol{S}^{1}\left(\theta\right)\boldsymbol{z},\boldsymbol{V}^{c}\left(\theta\right)=\boldsymbol{0},\boldsymbol{V}^{l}\left(\theta\right)=\boldsymbol{0},\boldsymbol{V}^{nl}\left(\boldsymbol{x}^{\perp},\theta\right)=\boldsymbol{0}
\end{align*}
with values from \eqref{eq:BUND-Rsquare}. The optimisation algorithm
uses a batch coordinate descent technique \cite{GaussSouthwell2015},
because in each individual component of the functional representation,
the problem is nearly linear, while considering all parameters at
the same time would be a more difficult problem. The scaling factors
$\delta^{k}$ and hence the invariant torus $\boldsymbol{K}^{\parallel}$
and $\boldsymbol{K}^{\perp}$ are updated after the optimisation step
for a component has concluded. This same technique was used for autonomous
systems in \cite{Szalai2023Fol}.

\subsection{\label{subsec:Recovery}Recovering the invariant manifold}

We now recover the invariant manifold from the two invariant foliations.
The invariant manifold is defined by \eqref{eq:MANIF-implicit}, which
does not stipulate how it is parameterised. We however want the invariant
manifold to have the same parametrisation as the invariant foliation
defined by encoder $\boldsymbol{U}$, which means that the conjugate
dynamics defined by $\boldsymbol{R}$ will be the same for the invariant
manifold. We now decompose the decoder of the invariant manifold into
two components in $X=Z\oplus Z^{c}$ and denote it by $\boldsymbol{W}\left(\boldsymbol{z},\theta\right)=\left(\boldsymbol{W}^{\parallel}\left(\boldsymbol{z},\theta\right),\boldsymbol{W}^{\perp}\left(\boldsymbol{z},\theta\right)\right)$.
The unknown decoder functions are found by solving the equations
\begin{equation}
\left.\begin{array}{rl}
\boldsymbol{U}\left(\boldsymbol{W}^{\parallel}\left(\boldsymbol{z},\theta\right),\boldsymbol{W}^{\perp}\left(\boldsymbol{z},\theta\right),\theta\right) & =\boldsymbol{z}\\
\boldsymbol{V}\left(\boldsymbol{W}^{\parallel}\left(\boldsymbol{z},\theta\right),\boldsymbol{W}^{\perp}\left(\boldsymbol{z},\theta\right),\theta\right) & =\boldsymbol{0}
\end{array}\right\} .\label{eq:MANIF-comp-implicit}
\end{equation}
Equations \eqref{eq:MANIF-comp-implicit} can be re-written to the
following form
\[
\begin{pmatrix}\boldsymbol{I} & \boldsymbol{U}^{l}\left(\theta\right)\\
\boldsymbol{V}^{l}\left(\theta\right) & \boldsymbol{I}
\end{pmatrix}\begin{pmatrix}\boldsymbol{W}^{\parallel}\\
\boldsymbol{W}^{\perp}
\end{pmatrix}=\begin{pmatrix}\boldsymbol{z}-\boldsymbol{U}^{c}\left(\theta\right)-\boldsymbol{U}^{nl}\left(\boldsymbol{W}^{\perp},\theta\right)\\
\boldsymbol{0}-\boldsymbol{V}^{c}\left(\theta\right)-\boldsymbol{V}^{nl}\left(\boldsymbol{W}^{\parallel},\theta\right)
\end{pmatrix},
\]
which can be solved by the iteration
\[
\begin{pmatrix}\boldsymbol{W}^{\parallel}\\
\boldsymbol{W}^{\perp}
\end{pmatrix}=\begin{pmatrix}\boldsymbol{I} & \boldsymbol{U}^{l}\left(\theta\right)\\
\boldsymbol{V}^{l}\left(\theta\right) & \boldsymbol{I}
\end{pmatrix}^{-1}\begin{pmatrix}\boldsymbol{z}-\boldsymbol{U}^{c}\left(\theta\right)-\boldsymbol{U}^{nl}\left(\boldsymbol{W}^{\perp},\theta\right)\\
\boldsymbol{0}-\boldsymbol{V}^{c}\left(\theta\right)-\boldsymbol{V}^{nl}\left(\boldsymbol{W}^{\parallel},\theta\right)
\end{pmatrix}.
\]
Given that our functions are represented by polynomials, the iteration
will converge from the initial guess $\boldsymbol{W}^{\parallel}=\boldsymbol{0}$,
$\boldsymbol{W}^{\perp}=\boldsymbol{0}$ to a polynomial of the same
order as $\boldsymbol{U}^{nl}$, $\boldsymbol{V}^{nl}$ in the same
number of steps as the highest polynomial order of $\boldsymbol{U}^{nl}$,
$\boldsymbol{V}^{nl}$.

\subsection{\label{subsec:NFORM}Normal form transformation of the conjugate
dynamics}

In order to extract information from the conjugate dynamics $\boldsymbol{R}$,
we transform it into its simplest form. This is the place where we
account for internal and parametric resonances. The transformation
uses the invariant manifold style to create a decoder. This style
is useful when we have an invariant manifold, for example as a result
of the recovery procedure in section \ref{subsec:Recovery}.

The invariance equation we use is given by 
\begin{equation}
\boldsymbol{T}\left(\breve{\boldsymbol{R}}\left(\boldsymbol{z},\theta\right),\theta+\omega\right)-\boldsymbol{R}^{d}\left(\boldsymbol{T}\left(\boldsymbol{z}\right),\theta\right)=\boldsymbol{0},\label{eq:NORMF-invar}
\end{equation}
where $\boldsymbol{T}:Z\times\mathbb{T}\to Z$ is the transformation
and $\breve{\boldsymbol{R}}$ is the normal form. The map $\boldsymbol{R}^{d}$
with diagonalised linear part is calculated by 
\[
\boldsymbol{R}^{d}\left(\boldsymbol{z},\theta\right)=\boldsymbol{U}^{d}\left(\theta+\omega\right)\boldsymbol{R}\left(\left(\boldsymbol{U}^{d}\left(\theta\right)\right)^{-1}\boldsymbol{z},\theta\right),
\]
where $\boldsymbol{R}$ is the conjugate dynamics that we obtained
from our invariant foliation also associated with the recovered invariant
manifold $\mathcal{M}$ in equation \eqref{eq:MANIF-implicit}. The
linear transformation $\boldsymbol{U}^{d}$ is the full invariant
bundle decomposition of $D_{1}\boldsymbol{R}\left(\boldsymbol{0},\cdot\right)$
calculated using the methods discussed in section \ref{subsec:Bundles},
and given by formula \eqref{eq:BUND-zero-left} for an index set $\mathcal{I}$
that covers the full dichotomy spectrum of $D_{1}\boldsymbol{R}\left(\boldsymbol{0},\cdot\right)$.
After the transformation, we have $\boldsymbol{R}^{d}\left(\boldsymbol{z},\theta\right)=\boldsymbol{\Lambda}\boldsymbol{z}+\boldsymbol{N}\left(\boldsymbol{z},\theta\right)$,
where $\boldsymbol{\Lambda}$ is diagonal, with possibly complex entries
and $\boldsymbol{N}\left(\boldsymbol{z},\theta\right)=\mathcal{O}\left(\left|\boldsymbol{z}\right|^{2}\right)$.

We represent the solution of \eqref{eq:NORMF-invar} by trigonometric
power series
\begin{equation}
\left.\begin{array}{rl}
\boldsymbol{T}\left(\boldsymbol{z},\theta\right) & =\sum_{j=1}^{\sigma}\boldsymbol{T}^{j}\left(\theta\right)\boldsymbol{z}^{\otimes j}\\
\breve{\boldsymbol{R}}\left(\boldsymbol{z},\theta\right) & =\sum_{j=1}^{\sigma}\breve{\boldsymbol{R}}^{j}\left(\theta\right)\boldsymbol{z}^{\otimes j}
\end{array}\right\} ,\label{eq:NFORM-ansatz}
\end{equation}
where
\begin{equation}
\left.\begin{array}{rl}
\boldsymbol{T}^{j}\left(\boldsymbol{\theta}\right)\boldsymbol{z}^{\otimes j} & =\sum_{i_{0}\cdots i_{j},k}\boldsymbol{e}_{i_{0}}T_{i_{0}i_{1}\cdots i_{j}}^{j,k}\mathrm{e}^{ik\theta}z_{i_{1}}\cdots z_{i_{j}},\\
\breve{\boldsymbol{R}}^{j}\left(\boldsymbol{\theta}\right)\boldsymbol{z}^{\otimes j} & =\sum_{i_{0}\cdots i_{j},k}\boldsymbol{e}_{i_{0}}\breve{R}_{i_{0}i_{1}\cdots i_{j}}^{j,k}\mathrm{e}^{ik\theta}z_{i_{1}}\cdots z_{i_{j}},
\end{array}\right\} \label{eq:NFORM-trafo}
\end{equation}
and $\boldsymbol{e}_{i}$, $i\in\left\{ 1,\ldots,\dim Z\right\} $
is our orthonormal basis in $Z$. Substituting the ansatz \eqref{eq:NFORM-ansatz}
into \eqref{eq:NORMF-invar} and separating the $j$th order terms
leads to the homological equation

\begin{equation}
\boldsymbol{T}^{1}\left(\boldsymbol{\theta}+\boldsymbol{\omega}\right)\breve{\boldsymbol{R}}^{j}\left(\boldsymbol{\theta}\right)\boldsymbol{z}^{\otimes j}+\boldsymbol{T}^{j}\left(\boldsymbol{\theta}+\boldsymbol{\omega}\right)\left(\boldsymbol{\Lambda}\boldsymbol{x}\right)^{\otimes j}-\boldsymbol{\Lambda}\boldsymbol{T}^{j}\left(\boldsymbol{\theta}\right)\boldsymbol{z}^{\otimes j}=\boldsymbol{\Gamma}^{j}\left(\boldsymbol{\theta}\right)\boldsymbol{z}^{\otimes j},\label{eq:NFORM-homological}
\end{equation}
where $\boldsymbol{\Gamma}^{j}\left(\boldsymbol{\theta}\right)$ are
terms composed of $\boldsymbol{R}^{d}$ and $\boldsymbol{T}$ of order
lower than $j$. Successively solving equation \eqref{eq:NFORM-homological}
leads to a normal form.

For the linear terms, the solution of equation \eqref{eq:NFORM-homological}
can be chosen as $\boldsymbol{T}^{1}\left(\boldsymbol{\theta}\right)=\boldsymbol{I}$
and $\breve{\boldsymbol{R}}^{1}\left(\boldsymbol{\theta}\right)=\boldsymbol{\Lambda}$.
The nonlinear terms are determined from equation
\[
\breve{R}_{i_{0}i_{1}\cdots i_{j}}^{j,k}+\lambda_{i_{1}}\cdots\lambda_{i_{j}}\mathrm{e}^{ik\omega}T_{i_{0}i_{1}\cdots i_{j}}^{j,k}-\lambda_{i_{0}}T_{i_{0}i_{1}\cdots i_{j}}^{j,k}=\Gamma_{i_{0}i_{1}\cdots i_{j}}^{j,k},
\]
which has the solution
\begin{align}
T_{i_{0}i_{1}\cdots i_{j}}^{j,k} & =\frac{1}{\lambda_{i_{1}}\cdots\lambda_{i_{j}}\mathrm{e}^{ik\omega}-\lambda_{i_{0}}}\Gamma_{i_{0}i_{1}\cdots i_{j}}^{j,k} & \breve{R}_{i_{0}i_{1}\cdots i_{j}}^{j,k} & =0\;\text{or}\label{eq:NFORM-std}\\
T_{i_{0}i_{1}\cdots i_{j}}^{j,k} & =0 & \breve{R}_{i_{0}i_{1}\cdots i_{j}}^{j,k} & =\Gamma_{i_{0}i_{1}\cdots i_{j}}^{j,k}.\label{eq:NFORM-res}
\end{align}
We call the case when $\lambda_{i_{1}}\cdots\lambda_{i_{j}}\mathrm{e}^{ik\omega}-\lambda_{i_{0}}\approx0$
an internal resonance, in which case the solution \eqref{eq:NFORM-res}
is used. As per our assumptions we only allow internal resonance for
$k=0$, which leads to an autonomous normal form.

Putting together all transformations, we define 
\[
\breve{\boldsymbol{W}}\left(\boldsymbol{z},\theta\right)=\boldsymbol{W}\left(\left(\boldsymbol{U}^{d}\left(\theta\right)\right)^{-1}\boldsymbol{T}\left(\boldsymbol{z},\theta\right),\theta\right).
\]
We find that the normal form $\breve{\boldsymbol{R}}$ satisfies the
manifold invariance equation
\[
\breve{\boldsymbol{W}}\left(\breve{\boldsymbol{R}}\left(\boldsymbol{z},\theta\right),\theta+\omega\right)=\breve{\boldsymbol{F}}\left(\breve{\boldsymbol{W}}\left(\boldsymbol{z},\theta\right),\theta\right),
\]
where $\breve{\boldsymbol{F}}$ is the unknown system in the frame
of the approximate vector bundles defined by $\boldsymbol{U}^{1}$,
$\boldsymbol{V}^{1}$.

\subsection{\label{subsec:FREQ-DAMP}Frequencies and damping ratios}

We now assume that the normal form transformation in section \ref{subsec:NFORM}
has led to an autonomous system, that is $\breve{\boldsymbol{R}}\left(\boldsymbol{z},\theta\right)=\breve{\boldsymbol{R}}\left(\boldsymbol{z}\right)$.
When we calculate the invariant foliation for a vector bundle associated
with a complex conjugate pair of eigenvalues, the conjugate dynamics
can also be written as a pair of complex conjugate functions \cite{wiggins2003introduction}
of the complex variable $z$ in the form of 
\[
\breve{\boldsymbol{R}}\left(\boldsymbol{z}\right)=\begin{pmatrix}s\left(z,\overline{z}\right)\\
\overline{s}\left(z,\overline{z}\right)
\end{pmatrix},
\]
where $\overline{\;}$ means complex conjugate. We now define 
\begin{align*}
\widehat{\boldsymbol{W}}\left(r,\beta,\theta\right) & =\breve{\boldsymbol{W}}\left(r\mathrm{e}^{i\beta},r\mathrm{e}^{-i\beta},\theta\right),\\
R\left(r\right) & =\left|s\left(r\mathrm{e}^{i\beta},r\mathrm{e}^{-i\beta}\right)\right|\\
T\left(r\right) & =\arg\,\mathrm{e}^{-i\beta}s\left(r\mathrm{e}^{i\beta},r\mathrm{e}^{-i\beta}\right)
\end{align*}
and the invariance equation becomes
\[
\widehat{\boldsymbol{W}}\left(R\left(r\right),\beta+T\left(r\right),\theta+\omega\right)=\boldsymbol{F}\left(\widehat{\boldsymbol{W}}\left(r,\beta,\theta\right),\theta\right).
\]
Functions $R$ and $T$ are independent of $\beta$ because we have
eliminated this dependence during the normal-form transformation by
only keeping near resonant terms.

The functions $R$ and $T$ also relate to the damping and the frequency
of the dynamics, in fact if we were to consider $Z$ as an Euclidean
frame, the frequency of the dynamics would be $\omega\left(r\right)=T\left(r\right)/\Delta t$
and the damping would be $\xi\left(r\right)=\log\left(r^{-1}R\left(r\right)\right)/T\left(r\right)$.
However the decoder $\widehat{\boldsymbol{W}}$ defines a nonlinear
frame in $X$ and therefore we need to calculate the damping and the
frequency by taking into account the nonlinearity of $\widehat{\boldsymbol{W}}$
.

There are two factors that makes the calculations inaccurate: the
amplitude predicted by $\widehat{\boldsymbol{W}}$ does not increase
linearly with parameter $r$ and the phase angles of two $d+1$ dimensional
tori for two fixed values of $r$ do not align in $X$ and therefore
when a solution decays, in each cycle a phase shift occurs relative
to what $T\left(r\right)$ predicts, which leads to a frequency miscalculation.
Here we fix both of these discrepancies.

To carry out the correction we use the transformation $r=\rho\left(\hat{r}\right)$
and $\beta=\hat{\beta}+\alpha\left(\rho\left(\hat{r}\right)\right)$,
where $\rho:\left[0,\infty\right)\to\left[0,\infty\right)$ and $\alpha:\left[0,\infty\right)\to\mathbb{R}$,
and define
\begin{equation}
\tilde{\boldsymbol{W}}\left(r,\beta,\theta\right)=\widehat{\boldsymbol{W}}\left(\rho\left(r\right),\beta+\alpha\left(\rho\left(r\right)\right),\theta\right).\label{eq:FREQ-reparam}
\end{equation}
This transformation re-scales the amplitude and introduces an amplitude
dependent phase shift. We do not shift the phase of the forcing, because
that would make the forcing frequency dependent on the vibration amplitude.
The two-dimensional tori parametrised by the amplitude parameter are
\[
\mathcal{T}_{r}=\left\{ \tilde{\boldsymbol{W}}\left(r,\beta,\theta\right):\beta,\theta\in[0,2\pi)\right\} .
\]
To make sure that the amplitude of the torus grows linearly with $r$,
we impose the constraint
\begin{equation}
\left(2\pi\right)^{-2}\int_{\mathbb{T}^{2}}\left|\widehat{\boldsymbol{W}}\left(\rho\left(r\right),\beta,\theta\right)\right|^{2}\mathrm{d}\beta\mathrm{d}\theta=r^{2}.\label{eq:FREQ-ampl}
\end{equation}
We also impose zero phase shift between nearby tori $\mathcal{T}_{r}$
and $\mathcal{T}_{r+\epsilon}$ using the constraint
\begin{equation}
\arg\min_{\delta}\int_{\mathbb{T}^{2}}\left|\widehat{\boldsymbol{W}}\left(\rho\left(r+\epsilon\right),\beta+\alpha\left(\rho\left(r+\epsilon\right)\right)+\delta,\theta\right)-\widehat{\boldsymbol{W}}\left(\rho\left(r\right),\beta+\alpha\left(\rho\left(r\right)\right),\theta\right)\right|^{2}\mathrm{d}\beta\mathrm{d}\theta\Big\vert_{\delta=0}=0,\label{eq:FREQ-phase}
\end{equation}
which stipulates that the distance between the points on the two tori
corresponding at the same parameters is minimal on average at zero
additional phase shift $\delta=0$. To satisfy constraint \eqref{eq:FREQ-ampl},
we define the function 
\[
\kappa\left(\rho\right)=\left(2\pi\right)^{-\left(d+1\right)/2}\sqrt{\int_{\mathbb{T}^{2}}\left|\widehat{\boldsymbol{W}}\left(\rho\left(r\right),\beta,\theta\right)\right|^{2}\mathrm{d}\beta\mathrm{d}\theta},
\]
and set
\begin{align*}
\rho\left(r\right) & =\kappa^{-1}\left(r\right).
\end{align*}
The necessary condition to find a minimum of \eqref{eq:FREQ-phase}
is a vanishing gradient, hence we calculate the gradient of
\[
L\left(\delta\right)=\frac{1}{2}\int_{\mathbb{T}^{2}}\left|\widehat{\boldsymbol{W}}\left(\rho\left(r+\epsilon\right),\beta+\alpha\left(\rho\left(r+\epsilon\right)\right)+\delta,\theta\right)-\widehat{\boldsymbol{W}}\left(\rho\left(r\right),\beta+\alpha\left(\rho\left(r\right)\right),\theta\right)\right|^{2}\mathrm{d}\beta\mathrm{d}\theta,
\]
at $\delta=0$, which leads to the equation of the necessary condition
\begin{multline}
\int_{\mathbb{T}^{2}}\Bigl\langle\widehat{\boldsymbol{W}}\left(\rho\left(r+\epsilon\right),\beta+\alpha\left(\rho\left(r+\epsilon\right)\right),\theta\right)-\widehat{\boldsymbol{W}}\left(\rho\left(r\right),\beta+\alpha\left(\rho\left(r\right)\right),\theta\right),\\
D_{2}\widehat{\boldsymbol{W}}\left(\rho\left(r+\epsilon\right),\beta+\alpha\left(\rho\left(r+\epsilon\right)\right),\theta\right)\Bigr\rangle\mathrm{d}\beta\mathrm{d}\theta=0.\label{eq:FREQ-phase-necess}
\end{multline}
Dividing equation \eqref{eq:FREQ-phase-necess} by $\epsilon$ and
letting $\epsilon\to0$, we obtain
\begin{multline*}
\lim_{\epsilon\to0}\epsilon^{-1}\left[\widehat{\boldsymbol{W}}\left(\rho\left(r+\epsilon\right),\beta+\alpha\left(\rho\left(r+\epsilon\right)\right),\theta\right)-\widehat{\boldsymbol{W}}\left(\rho\left(r\right),\beta+\alpha\left(\rho\left(r\right)\right),\theta\right)\right]\\
=D_{1}\widehat{\boldsymbol{W}}\left(\rho\left(r\right),\beta+\alpha\left(\rho\left(r\right)\right),\theta\right)\dot{\rho}\left(r\right)+D_{2}\widehat{\boldsymbol{W}}\left(\rho\left(r\right),\beta+\alpha\left(\rho\left(r\right)\right),\theta\right)\dot{\alpha}\left(\rho\left(r\right)\right)\dot{\rho}\left(r\right)=0.
\end{multline*}
Further eliminating the non-zero $\dot{\rho}$ and substituting $r\to\rho^{-1}\left(r\right)$,
we get
\[
\int_{\mathbb{T}^{2}}\left\langle D_{1}\widehat{\boldsymbol{W}}\left(r,\beta,\theta\right)+D_{2}\widehat{\boldsymbol{W}}\left(r,\beta,\theta\right)\dot{\alpha}\left(r\right),D_{2}\widehat{\boldsymbol{W}}\left(r,\beta,\theta\right)\right\rangle \mathrm{d}\beta\mathrm{d}\theta=0,
\]
whose solution for $\dot{\alpha}$ is 
\begin{equation}
\dot{\alpha}\left(r\right)=-\left[\int_{\mathbb{T}^{2}}\left\langle D_{2}\widehat{\boldsymbol{W}}\left(r,\beta,\theta\right),D_{2}\widehat{\boldsymbol{W}}\left(r,\beta,\theta\right)\right\rangle \mathrm{d}\beta\mathrm{d}\theta\right]^{-1}\int_{\mathbb{T}^{2}}\left\langle D_{1}\widehat{\boldsymbol{W}}\left(r,\beta,\theta\right),D_{2}\widehat{\boldsymbol{W}}\left(r,\beta,\theta\right)\right\rangle \mathrm{d}\beta\mathrm{d}\theta.\label{eq:FREQ-phase-DE}
\end{equation}
The expression of $\dot{\alpha}$ can be integrated with initial condition
$\alpha\left(0\right)=0$ to recover the required phase shift.

The new parametrisation as defined by equation \eqref{eq:FREQ-reparam}
satisfies the invariance equation
\[
\tilde{\boldsymbol{W}}\left(\tilde{R}\left(r\right),\beta+\tilde{T}\left(r\right),\boldsymbol{\theta}\right)=\overline{\boldsymbol{F}}\left(\tilde{\boldsymbol{W}}\left(r,\beta,\boldsymbol{\theta}\right),\theta\right),
\]
where

\begin{align*}
\tilde{R}\left(r\right) & =\rho^{-1}\left(R\left(\rho\left(r\right)\right)\right),\\
\tilde{T}\left(r\right) & =T\left(\rho\left(r\right)\right)+\alpha\left(\rho\left(r\right)\right)-\alpha\left(R\left(\rho\left(r\right)\right)\right).
\end{align*}
Finally, the instantaneous frequency and damping ratio becomes 
\begin{align*}
\omega\left(r\right) & =\tilde{T}\left(r\right)/\Delta t\\
\xi\left(r\right) & =\log\left(r^{-1}\tilde{R}\left(r\right)\right)/\tilde{T}\left(r\right),
\end{align*}
respectively, where $\Delta t$ is the sampling period. The ROM with
accurate frequencies and damping can be written in the ordinary differential
equation form
\begin{align*}
\dot{r} & =-\zeta\left(r\right)\omega\left(r\right)r,\\
\dot{\theta} & =\omega\left(r\right).
\end{align*}

\subsection{\label{subsec:ROMid}Summary of ROM identification}

The ROM identification has generally three major steps, that can be
characterised as pre-processing the data, fitting the foliation to
data and analysing the result. Each major step can be broken down
into smaller tasks as follows.
\begin{enumerate}
\item Pre-processing the data
\begin{enumerate}
\item Identification of an approximate invariant torus and the linear dynamics
about the torus. (Section \ref{subsec:LINID}.)
\item Decomposition of the linear dynamics into invariant vector bundles.
(Section \ref{subsec:Bundles}.)
\item Transforming the data into the coordinate system of the vector bundles.
(Equations \eqref{eq:FOIL-datatran} and \eqref{eq:FOIL-theta}.)
\end{enumerate}
\item Fitting two invariant foliations to the data, one that contains the
dynamics of interest and one that is possibly less accurate, but defines
the invariant manifold of the the dynamics of interest. (Section \ref{subsec:FOIL-FIT}.)
\item Post-processing the result
\begin{enumerate}
\item Recovering the invariant manifold from the foliation. (Section \ref{subsec:Recovery}.)
\item If necessary, calculate instantaneous frequencies and damping ratios
that are accurate. (Sections \ref{subsec:NFORM} and \ref{subsec:FREQ-DAMP}.)
\end{enumerate}
\end{enumerate}
In what follows we illustrate this procedure through examples.

\section{\label{sec:examples}Examples}

We will use synthetic data as the author does not have access to experimental
data of the kind necessary to demonstrate the method. In addition
to the methods discussed, we also calculate invariant manifolds directly
from vector fields. This includes finding the invariant torus and
calculating the invariant vector bundles about the torus \cite{SzalaiForcedTheory2024}.
We also use a facility in the Julia programming language that allows
Taylor expansion of solutions of differential equations through automatic
differentiation. Hence we can also calculate invariant manifolds and
invariant foliations from the generated maps without using simulation
data.

In what follows we will also display the eigenvalues \eqref{eq:BUND-tens-eigvals}
in a vector field form, that is
\[
\lambda_{\mathit{vf}}=\frac{1}{\Delta t}\log\lambda_{\mathit{map}}
\]
This helps visualisation, because eigenvalues are now appear on vertical
lines as opposed to concentric circles. We analyse the relative error
\eqref{eq:FOIL-Erel} of our calculations with respect to the invariance
equation \eqref{eq:FOIL-invariance}. To provide a basis for comparison
we solve the manifold invariance equation for the underlying ordinary
differential equation in our examples, which is
\begin{equation}
D_{1}\boldsymbol{W}\left(\boldsymbol{z},\theta\right)\boldsymbol{R}\left(\boldsymbol{z},\theta\right)+\omega_{0}D_{2}\boldsymbol{W}\left(\boldsymbol{z},\theta\right)=\boldsymbol{f}\left(\boldsymbol{W}\left(\boldsymbol{z},\theta\right),\theta\right),\label{eq:MANIF-ODE-invar}
\end{equation}
where $\boldsymbol{f}$ represents the ordinary differential equation
\begin{align*}
\dot{\boldsymbol{x}} & =\boldsymbol{f}\left(\boldsymbol{x},\theta\right),\\
\dot{\theta} & =\omega_{0}.
\end{align*}
The solution methods is similar to the normal for transformation detailed
in section \ref{subsec:NFORM}. For the direct calculation we use
\begin{equation}
E_{\mathit{rel}}\left(\boldsymbol{z},\theta\right)=\frac{\left\Vert D_{1}\boldsymbol{W}\left(\boldsymbol{z},\theta\right)\boldsymbol{R}\left(\boldsymbol{z},\theta\right)+\omega_{0}D_{2}\boldsymbol{W}\left(\boldsymbol{z},\theta\right)-\boldsymbol{f}\left(\boldsymbol{W}\left(\boldsymbol{z},\theta\right),\theta\right)\right\Vert }{\left\Vert \boldsymbol{W}\left(\boldsymbol{z},\theta\right)\right\Vert }\label{eq:E-rel-ode}
\end{equation}
for the relative error and $\left\Vert \boldsymbol{W}\left(\boldsymbol{z},\theta\right)\right\Vert $
for the amplitude.

\subsection{Shaw-Pierre oscillator}

We consider a variant of the well-studied example of reduced order
modelling that appeared in \cite{ShawPierre}. The equations of motions
are written as
\begin{equation}
\begin{array}{rl}
\dot{x}_{1} & =x_{3},\\
\dot{x}_{2} & =x_{4},\\
\dot{x}_{3} & =-cx_{3}-kx_{1}-\kappa y_{1}^{3}+k\left(y_{2}-y_{1}\right)+c\left(y_{4}-y_{3}\right)+A\cos\left(\omega t+0.1\right),\\
\dot{x}_{4} & =-cy_{4}-ky_{2}-k\left(y_{2}-y_{1}\right)-c\left(y_{4}-y_{3}\right)+A\cos\left(\omega t\right).
\end{array}\label{eq:ode-shawpierre}
\end{equation}
We choose the parameters $k=1$, $\kappa=0.2$ and $c=2^{-5}$. The
unforced natural frequencies are $\omega_{1}=1$ and $\omega_{2}=1.7314$
and the damping ratios are $\zeta_{1}=0.0156$ and $\zeta_{2}=0.0271$.
The spectral quotients are $\beth_{1}=1$ and $\beth_{2}=3$. When
we turn on the forcing at $A=0.25$, the frequencies become $\omega_{1}=1.0263$,
$\omega_{2}=1.7473$, the damping ratios become $\zeta_{1}=0.0153$,
$\zeta_{2}=0.0268$, and the spectral quotients $\beth_{1}=1$, $\beth_{2}=2.982$.
The invariant manifold, the spectral points and a representation of
the invariant vector bundle along the periodic orbit can be seen in
figure \ref{fig:shawpierre-spectrum}. 
\begin{figure}
\begin{centering}
\includegraphics[width=0.9\textwidth]{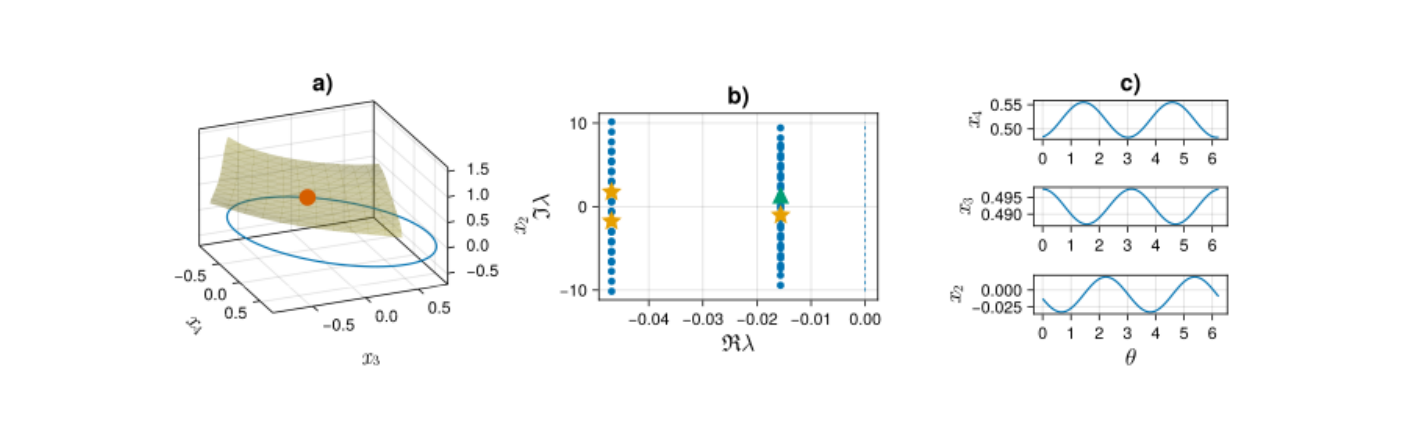}
\par\end{centering}
\caption{\label{fig:shawpierre-spectrum}a) The invariant torus and the invariant
manifold at one point along the torus. b) The spectrum of the linear
dynamics about the torus. The stars denote the representative eigenvalues
as selected by \eqref{eq:BUND-sel}, the triangle denotes the eigenvalue
used for model reduction. c) An incomplete representation of the invariant
vector bundle: three out of four coordinates of the first vector that
spans the two-dimensional invariant vector bundle.}
\end{figure}

For ROM identification, the data was generated using $600$ trajectories
each having $50$ points with time step $\Delta t=0.8$. The initial
conditions were sampled uniformly from the four dimensional unit ball.
The reduced order model is visualised in figure \ref{fig:shawpierre-ROM}.
The figures compare our direct calculation from the vector field and
the ROM identification (as per section \ref{subsec:ROMid}) from simulation
data. The instantaneous frequencies and damping ratios, as calculated
by the two methods, are close to each other for lower amplitudes and
diverge for higher amplitudes. The relative error of the asymptotic
expansion directly from the vector field varies with amplitude, while
the data-driven method has a uniform error over the amplitude. This
means that the direct calculation can have higher errors at higher
amplitudes than the data-driven method. There could be a few reasons
for disagreeing calculations at high amplitudes: non-uniqueness of
invariant manifolds discussed in the introduction, non-convergence
of series expansion (being outside the radius of convergence) or inability
to represent the foliation with functions satisfying the constraints
\eqref{eq:FOIL-constraints}. The most likely culprit is non-uniqueness,
although we have observed through carrying out varying high-order
direct calculations that the radius of convergence is only slightly
greater than the amplitude where the disagreement of the two calculations
occur. 
\begin{figure}
\begin{centering}
\includegraphics[width=0.9\textwidth]{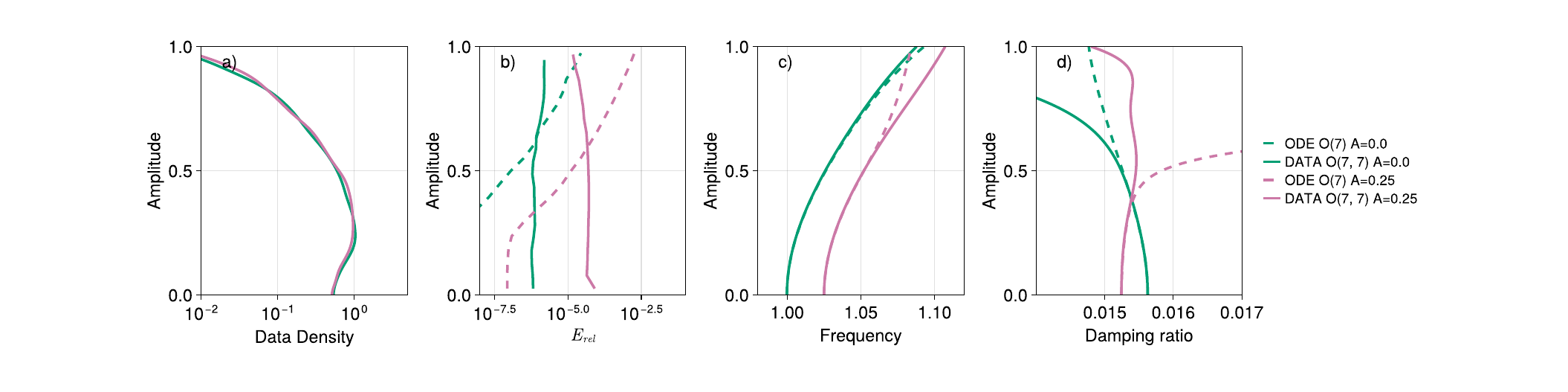}\\
\includegraphics[width=0.9\textwidth]{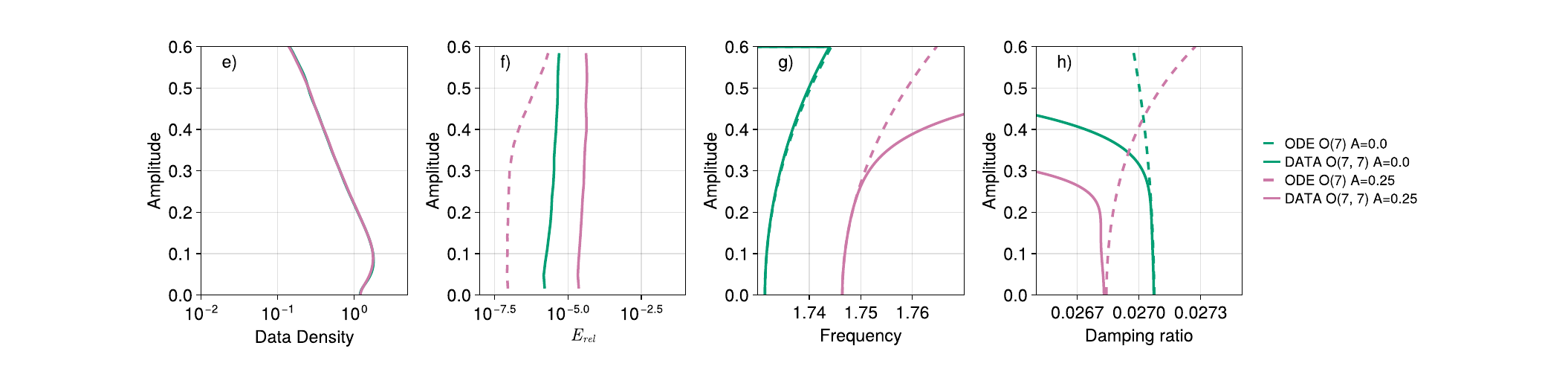}
\par\end{centering}
\caption{\label{fig:shawpierre-ROM}Reduced order models of \eqref{eq:ode-shawpierre}.
a,e) The distribution of data along the leaves of the foliation; b,f)
the relative error of the invariance equation over the invariant manifold
and over the data; c,g) instantaneous frequency of the vibration predicted
by the ROM; d,h) instantaneous damping ratio.}
\end{figure}

\subsection{Traffic model on a circular track}

We are using a car-following model with a simple optimal velocity
function as presented in \cite{OroszWilsonEtAl2009}. The model describes
a set of cars running on a circular track. The cars cannot overtake
each other and their velocity $v_{k}$ is dictated by the distance
from the car just in front of them $h_{k}$ (called headway). In this
particular model drivers reacts instantly to the changes in headway.
Each car has a maximum velocity $V_{k}$. The differential equations
describing the system are

\begin{align*}
\dot{v}_{k} & =\alpha\left(\frac{V_{k}\left(h_{k}-1\right)^{2}}{1+\left(h_{k}-1\right)^{2}}-v_{k}\right) &  & k=1\ldots n,\\
\dot{h}_{k} & =v_{k-1}-v_{k} &  & k=2\ldots n,\\
h_{1} & =L-\sum_{j=2}^{n}h_{j}.
\end{align*}
We assume that the length of the circular track is $L=2n$, the maximum
velocity of each vehicle is $V_{k}=1$, except for $V_{n}=1+A\cos\omega t$.
We set the forcing frequency $\omega=0.4374$ and use either $A=0$
or $A=0.2$ as the forcing amplitude. We also assume $n=5$ cars so
that semi-analytical calculations are still possible. For this given
set of parameters and without forcing, the steady state is such that
all headways are equal $h_{k}^{\star}=L/n$ and all velocities are
\[
v_{k}^{\star}=\frac{\left(L/n-1\right)^{2}}{1+\left(L/n-1\right)^{2}}.
\]
The value $\alpha=0.75$ is used so that the system is still stable,
but near the stability boundary. The eigenvalues of the Jacobian about
this equilibrium are
\begin{align*}
\lambda_{12} & =-0.0163\pm0.4971i,\\
\lambda_{34} & =-0.2276\pm0.7480i,\\
\lambda_{56} & =-0.5223\pm0.7480i,\\
\lambda_{78} & =-0.7337\pm0.4971i,\\
\lambda_{9} & =-0.75.
\end{align*}
The spectral quotient for the linear subspace is $\beth_{1-2}=1$
while the spectral quotient for the $\beth_{3-9}=45.993$. To discretise
the system we used $\ell=7$ Fourier harmonics and order 7 polynomials.
The forced system about the periodic orbit has representative eigenvalues
\begin{align*}
\lambda_{12} & =-0.0209\pm0.5040i,\\
\lambda_{34} & =-0.2281\pm0.7409i,\\
\lambda_{56} & =-0.5203\pm0.7409i,\\
\lambda_{78} & =-0.7306\pm0.4919i,\\
\lambda_{9} & =-0.75.
\end{align*}
The spectral quotients are $\beth_{1-2}=1$ and $\beth_{3-9}=33.7454$.
The invariant manifold about a point along the periodic orbit, the
spectrum and a representation of the invariant vector bundle can be
seen in figure \ref{fig:car-spectrum}.
\begin{figure}
\begin{centering}
\includegraphics[width=0.9\textwidth]{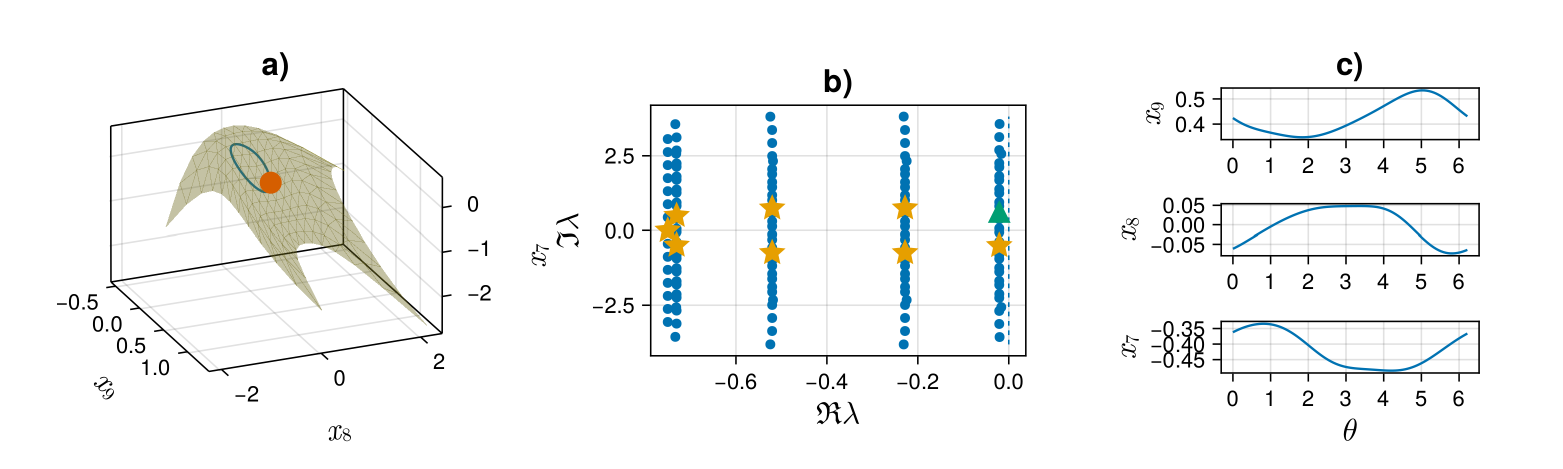}
\par\end{centering}
\caption{\label{fig:car-spectrum}a) The invariant torus and the invariant
manifold at one point along the torus. b) The spectrum of the linear
dynamics about the torus. The stars denote the representative eigenvalues
as selected by \eqref{eq:BUND-sel}, the triangle denotes the eigenvalue
use for model reduction. c) An incomplete representation of the invariant
vector bundle: three out of four coordinates of the first vector that
spans the two-dimensional invariant vector bundle.}
\end{figure}
For ROM identification, the data was generated using $600$ trajectories
each having $50$ points with time step $\Delta t=0.8$. The initial
conditions were sampled uniformly from the nine-dimensional ball of
radius $1.2$ about the origin. The ROM associated with the spectrum
points $\lambda_{12}$, both for the forced and unforced system can
be seen in figure \ref{fig:car-ROMs}. As before, the direct calculations
and the identified ROMs agree well for lower amplitudes, after which
they diverge. The divergence is most likely due to non-uniqueness
of invariant manifolds. The levels of error is higher than in the
previous example, which can also be attributed to the use of polynomials
with compressed tensor coefficients.
\begin{figure}
\begin{centering}
\includegraphics[width=0.99\textwidth]{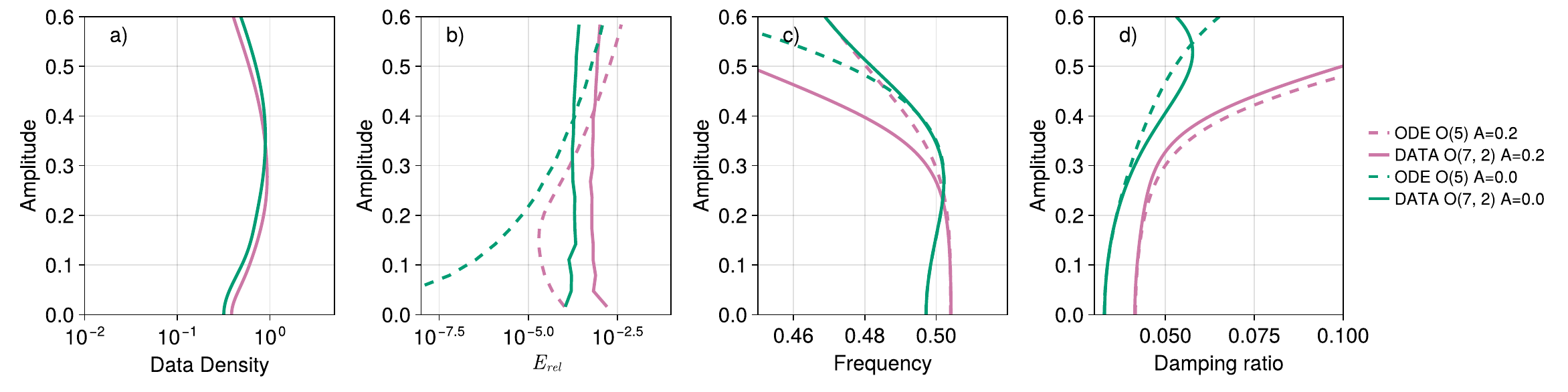}
\par\end{centering}
\caption{\label{fig:car-ROMs}Reduced order models of \eqref{eq:ode-shawpierre}.
a) The distribution of data along the leaves of the foliation; b)
the relative error of the invariance equation over the invariant manifold
and over the data; c) instantaneous frequency of the vibration predicted
by the ROM; d) instantaneous damping ratio.}
\end{figure}

\section{Discussion}

We have demonstrated how to identify reduced order models from data
using invariant foliations. We have recalled, that if genuine ROMs
are sought only invariant foliations can be used. We have discussed
how the uniqueness criteria of invariant manifolds and foliations
are local to the (quasi-) periodic orbit of the equilibrium and therefore
it is impossible to make use of when a ROM is identified from data.
This necessitates further work on finding a suitable and data-friendly
uniqueness criterion. Despite this issue, we were able to identify
reasonable ROMs at moderate distances from the equilibria and (quasi-)
periodic orbits.

In this work we emphasised the use of normal forms for ROM creation
as opposed to sparse regression, where the number of model components
are singled out using $L_{1}$ penalty terms within an optimisation
process \cite{BruntonPNAS2016}. Our approach therefore does not require
any trade-off between accuracy and model simplicity. The model is
always simple, within a suitable coordinate system.

In order to use autoencoders, the data has to lie on a low-dimensional
invariant manifold. Running simulations and ignoring the initial transition,
as in \cite{Cenedese2022NatComm} can be a strategy. This method however
has serious flaws. To be able to ignore the initial transient the
dynamics of interest must be significantly slower than the rest of
the dynamics. However when there is a very slow dynamics, the problem
non-uniqueness becomes significant given the large spectral quotient
of the invariant manifold. The data can also be skewed and have bias.
In fact, autoencoders identify where the data is in the phase space
as opposed where the sought after dynamics is.

In the companion paper \cite{SzalaiForcedTheory2024} we have also
touched on non-linearisability. In general, the dynamics about an
asymptotically stable (quasi-) periodic orbit or equilibrium is linearisable
within the basin of attraction. Therefore in \cite{SzalaiForcedTheory2024}
a linear ROM within a nonlinear coordinate system could capture nonlinear
phenomena remarkably well. However in a data-driven setting, the solution
of the invariance equation is not asymptotic but global and approximation
errors behave differently. In \cite{Szalai2023Fol} we have shown
that this approximation error causes the data-driven ROM to underestimate
the nonlinear dynamics. The present approach of invariant foliations
is suitable to capture fully nonlinear dynamics.

\paragraph{Software}

The computer code that produced the results in this paper can be found
at \href{https://github.com/rs1909/InvariantModels}{https://github.com/rs1909/InvariantModels}

\bibliographystyle{plain}
\bibliography{../../../Bibliography/AllRef}

\begin{thebibliography}{10}

\bibitem{AulbachFiber1998}
B.~Aulbach.
\newblock The fundamental existence theorem on invariant fiber bundles.
\newblock {\em Journal of Difference Equations and Applications},
  3(5-6):267--312, 1998.

\bibitem{BartelsStewartAlg432}
R.~H. Bartels and G.~W. Stewart.
\newblock Algorithm 432 [c2]: Solution of the matrix equation ax + xb = c [f4].
\newblock {\em Commun. ACM}, 15(9):820--826, 1972.

\bibitem{BatesFoliations2000}
P.~W. Bates, K.~Lu, and C.~Zeng.
\newblock Invariant foliations near normally hyperbolic invariant manifolds for
  semiflows.
\newblock {\em Trans. Amer. Math. Soc.}, 352(10):4641--4676, 2000.

\bibitem{biau2015lectures}
G.~Biau and L.~Devroye.
\newblock {\em Lectures on the Nearest Neighbor Method}.
\newblock Springer Series in the Data Sciences. Springer, 2015.

\bibitem{boyd_vandenberghe_2018}
S.~Boyd and L.~Vandenberghe.
\newblock {\em Introduction to Applied Linear Algebra: Vectors, Matrices, and
  Least Squares}.
\newblock Cambridge University Press, 2018.

\bibitem{BruntonPNAS2016}
S.L. Brunton, J.L. Proctor, J.N. Kutz, and W.~Bialek.
\newblock Discovering governing equations from data by sparse identification of
  nonlinear dynamical systems.
\newblock {\em Proceedings of the National Academy of Sciences of the United
  States of America}, 113(15):3932--3937, 2016.

\bibitem{CabreLlave2003}
X.~Cabr{\'e}, E.~Fontich, and R.~{de la Llave}.
\newblock The parameterization method for invariant manifolds {I}: {M}anifolds
  associated to non-resonant subspaces.
\newblock {\em Indiana Univ. Math. J.}, 52:283--328, 2003.

\bibitem{CAMPSVALLS20231}
G.~Camps-Valls, A.~Gerhardus, U.~Ninad, G.~Varando, G.~Martius,
  E.~Balaguer-Ballester, R.~Vinuesa, E.~Diaz, L.~Zanna, and J.~Runge.
\newblock Discovering causal relations and equations from data.
\newblock {\em Physics Reports}, 1044:1--68, 2023.

\bibitem{Cenedese2022NatComm}
M.~Cenedese, J.~Ax{\aa}s, B.~B{\"a}uerlein, K.~Avila, and G.~Haller.
\newblock Data-driven modeling and prediction of non-linearizable dynamics via
  spectral submanifolds.
\newblock {\em Nat Commun}, 13(872), 2022.

\bibitem{Champion2019Autoencoder}
K.~Champion, B.~Lusch, J.~{Nathan Kutz}, and S.~L. Brunton.
\newblock Data-driven discovery of coordinates and governing equations.
\newblock {\em Proceedings of the National Academy of Sciences of the United
  States of America}, 116(45):22445--22451, 2019.

\bibitem{chicone2008ordinary}
C.~Chicone.
\newblock {\em Ordinary Differential Equations with Applications}.
\newblock Texts in Applied Mathematics. Springer New York, 2008.

\bibitem{delaLlave1997}
R.~de~la Llave.
\newblock Invariant manifolds associated to nonresonant spectral subspaces.
\newblock {\em Journal of Statistical Physics}, 87(1):211--249, 1997.

\bibitem{Llave1995}
R.~de~la Llave and C.~E. Wayne.
\newblock On irwin's proof of the pseudostable manifold theorem.
\newblock {\em Mathematische Zeitschrift}, 219:301--321, 1995.

\bibitem{epstein2008}
J.~M. Epstein.
\newblock Why model?
\newblock {\em Journal of Artificial Societies and Social Simulation},
  11(4):12, 2008.

\bibitem{Haro2016}
{\`A}.~Haro, M.~Canadell, Al. Luque, J.~M. Mondelo, and J.-L. Figueras.
\newblock {\em The Parameterization Method for Invariant Manifolds: From
  Rigorous Results to Effective Computations}, volume 195 of {\em Applied
  Mathematical Sciences}.
\newblock Springer, 2016.

\bibitem{hirsch1970}
M.~W. Hirsch, C.~C. Pugh, and M.~Shub.
\newblock Invariant manifolds.
\newblock {\em Bull. Amer. Math. Soc.}, 76(5):1015--1019, 09 1970.

\bibitem{Kevrekidis2003}
I.~G. Kevrekidis, C.~W. Gear, J.~M. Hyman, P.~G. Kevrekidis, O.~Runborg, and
  C.~Theodoropoulos.
\newblock Equation-free, coarse-grained multiscale computation: enabling
  microscopic simulators to perform system-level analysis.
\newblock {\em Communications in Mathematical Sciences}, 1(4):715--762, 2003.

\bibitem{Kramer1991autoencoder}
M.~A. Kramer.
\newblock Nonlinear principal component analysis using autoassociative neural
  networks.
\newblock {\em AIChE Journal}, 37(2):233--243, 1991.

\bibitem{Lawson1974}
H.~Blaine Lawson, Jr.
\newblock Foliations.
\newblock {\em Bull. Amer. Math. Soc.}, 80:369--418, 1974.

\bibitem{GaussSouthwell2015}
J.~Nutini, M.~Schmidt, I.~H. Laradji, M.~Friedlander, and H.~Koepke.
\newblock Coordinate descent converges faster with the gauss-southwell rule
  than random selection.
\newblock In {\em Proceedings of the 32nd International Conference on
  International Conference on Machine Learning - Volume 37}, ICML'15, pages
  1632--1641, 2015.

\bibitem{OroszWilsonEtAl2009}
G.~Orosz, R.~E. Wilson, R.~Szalai, and G.~St{\'e}p{\'a}n.
\newblock Exciting traffic jams: Nonlinear phenomena behind traffic jam
  formation on highways.
\newblock {\em Phys. Rev. E}, 80:046205, Oct 2009.

\bibitem{ShubFoliation2012}
C.~Pugh, M.~Shub, and A.~Wilkinson.
\newblock H{\"o}lder foliations, revisited, 2012.

\bibitem{Roberts89}
A.~J. Roberts.
\newblock Appropriate initial conditions for asymptotic descriptions of the
  long-term evolution of dynamical systems.
\newblock {\em J. Austral. Math. Soc. Ser. B}, 31:48--75, 1989.

\bibitem{ShannonNyquist}
C.E. Shannon.
\newblock Communication in the presence of noise.
\newblock {\em Proceedings of the IRE}, 37(1):10--21, 1949.

\bibitem{ShawPierre}
S.~W. Shaw and C~Pierre.
\newblock Normal-modes of vibration for nonlinear continuous systems.
\newblock {\em {J. Sound Vibr.}}, {169}({3}):319--347, {1994}.

\bibitem{Szalai2020ISF}
R.~Szalai.
\newblock Invariant spectral foliations with applications to model order
  reduction and synthesis.
\newblock {\em Nonlinear Dynamics}, 101(4):2645--2669, 2020.

\bibitem{Szalai2023Fol}
R.~Szalai.
\newblock Data-driven reduced order models using invariant foliations,
  manifolds and autoencoders.
\newblock {\em Journal of Nonlinear Science}, 33(5):75, 2023.

\bibitem{SzalaiForcedTheory2024}
R.~Szalai.
\newblock Non-resonant invariant foliations of quasi-periodically forced
  systems, 2024.

\bibitem{trefethen}
L.~N. Trefethen.
\newblock {\em Spectral Methods in MATLAB}.
\newblock SIAM Philadelphia, 2000.

\bibitem{wiggins2003introduction}
S.~Wiggins.
\newblock {\em Introduction to Applied Nonlinear Dynamical Systems and Chaos}.
\newblock Texts in Applied Mathematics. Springer New York, 2003.

\end{thebibliography}

\end{document}